\def\Z{{\mathbb Z}}
\def\bZ{{\mathbb Z}}
\def\GL{{\rm GL}}
\def\SL{{\rm SL}}
\def\Cl{{\rm Cl}}
\def\Sym{{\rm Sym}}
\def\Det{{\rm Det}}
\def\Aut{{\rm Aut}}
\def\Stab{{\rm Stab}}
\def\End{{\rm End}}
\def\Frac{{\rm Frac}}
\def\im{{\rm im}}
\def\P{{\mathbb P}}
\def\Disc{{\rm Disc}}
\def\red{{\rm red}}
\def\R{{\mathbb R}}
\def\C{{\mathbb C}}
\def\F{{\mathbb F}}
\def\bR{{\mathbb R}}
\def\bF{{\mathbb F}}
\def\I{{\mathcal I}}
\def\Q{{\mathbb Q}}
\def\Z{{\mathbb Z}}
\def\P{{\mathbb P}}
\def\F{{\mathbb F}}
\def\Q{{\mathbb Q}}
\def\O{{\mathcal O}}
\def\cI{{\mathcal I}}
\def\bQ{{\mathbb Q}}
\def\bZ{{\mathbb Z}}
\def\bP{{\mathbb P}}
\def\bF{{\mathbb F}}
\def\bQ{{\mathbb Q}}
\def\cO{{\mathcal O}}
\def\cI{{\mathcal I}}
\newcommand{\ritem}[1]{\item[{\rm #1}]}
\newtheorem{theorem}{Theorem}
\newtheorem{lemma}[theorem]{Lemma}
\newtheorem{corollary}[theorem]{Corollary}
\newtheorem{remark}[theorem]{Remark}
\newenvironment{proof}{\noindent {\bf Proof:}}{$\Box$ \vspace{2 ex}}
\begin{document}

\title{On 
the mean number of 
2-torsion elements in the class groups,
narrow class groups,
and ideal groups 
 of  cubic orders and fields}

\author{Manjul Bhargava and Ila Varma}

\maketitle

\begin{abstract}
Given any family of cubic fields defined by local conditions at finitely many primes, 
  we determine the mean number of 2-torsion elements in the class
  groups and narrow class groups of 
  these 
  cubic fields, when they are ordered by their absolute discriminants.

For
   an order $\O$ in a cubic field, we 
   study the three
  groups: $\Cl_2(\O)$, the group of ideal classes
  of $\O$ of order 2; 
   $\Cl^+_2(\O)$, the group of narrow ideal classes
  of $\O$ of order 2; 
  and~$\I_2(\O)$, the group of ideals of $\O$ of order 2.
  We prove
  that the mean value of the difference $|\Cl_2(\O)|-\frac{1}{4}|\I_2(\O)|$ is always equal to 1, regardless
  of whether one averages over the maximal orders in real cubic
  fields, over all orders in real cubic fields, or indeed over {\it any}
  family of real cubic orders defined by local conditions.  For the narrow
  class group, we prove that the average value of the difference 
  $|\Cl^+_2(\O)|-|\I_2(\O)|$ is equal to 1 for any such family.  Also, for any family
  of complex cubic orders defined by local conditions, we prove
  similarly that the mean value of the difference $|\Cl_2(\O)|-\frac{1}{2}|\I_2(\O)|$ is always equal to 1,
  independent of the family.

  The determination of these mean numbers allows us to prove a number
  of further results as by-products.  Most notably, we prove---in
  stark contrast to the case of quadratic fields---that: 1) a positive proportion of cubic fields
 have {\it odd} class number; 
  2) a positive proportion of real cubic fields have
  isomorphic 2-torsion in the class group and
  the narrow class group; and 3) a positive proportion of real
  cubic fields contain units of mixed real signature.
  We also show that a positive proportion of real cubic fields have narrow class group strictly larger than the class
 group, and thus a positive proportion of real cubic fields do not possess units of every possible real
  signature.
\end{abstract}

\section{Introduction}

For an order $\O$ in a number field $K$, let $\Cl(\O)$ and $\Cl^+(\O)$
denote the class group and the narrow class group\footnote{Recall that
  the {\it class group} $\Cl(\O)$ of an order $\O$ in a number field
$K$ is defined as $\cI(\O)
  / P(\O),$ where $\cI(\O)$ is the group of invertible fractional ideals
  of $\O$ and $P(\O)$ is the group of principal fractional ideals of
  $\O$, that is, ideals of the form $a\O$ where $a\in K^\times$.  The
  {\it narrow class group} $\Cl^+(\O)$ is defined as the quotient $\cI(\O)
  / P(\O)^+$, where now $P(\O)^+$ is the group of \emph{totally positive}
  principal fractional ideals of $\O$, that is, ideals of the form $a\O$
  where $a$ is an element of $K^\times$ such that $\sigma(a)$ is
  positive for every embedding $\sigma : K \to \R$.  
} of $\O$
respectively.  If $\O$ is the maximal order of $K$, then one defines
the class group and narrow class group of $K$ by $\Cl(K):=\Cl(\O)$ and
$\Cl^+(K):=\Cl^+(\O)$.  Finally, for any prime $p$ let $\Cl_p(\O)$, $\Cl_p^+(\O)$,
$\Cl_p(K)$, and $\Cl_p^+(K)$ denote the $p$-torsion subgroups of
$\Cl(\O)$, $\Cl^+(\O)$, $\Cl(K)$, and $\Cl^+(K)$, respectively.

In this article, we begin by proving: 

\begin{theorem}\label{cgvsncg}
When the set of isomorphism classes of cubic fields satisfying any specified set of local conditions at any finite set of primes are ordered by their absolute discriminants:
\begin{itemize} 
\item[\rm{(a)}]
The average number of $2$-torsion elements in the class groups of such totally real cubic fields is $5/4$.
\item[\rm{(b)}]
The average number of $2$-torsion elements in the $($\!narrow$)$ class groups of such  complex cubic fields is $3/2$.
\item[\rm{(c)}]
The average number of $2$-torsion elements in the narrow class groups of such totally real cubic fields is $2$.
\end{itemize}
\end{theorem}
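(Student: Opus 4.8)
The plan is to deduce Theorem~\ref{cgvsncg} from the more refined ``class group minus ideal group'' identities announced in the abstract, so the first task is to establish those. The key algebraic input is the exact sequence relating $\Cl_2(\O)$, $\Cl_2^+(\O)$, and $\I_2(\O)$ to the signature map on units: for an order $\O$ in a cubic field $K$, one has $1 \to \O^\times/\O^{\times 2}\cdot(\text{tot.\ pos.}) \to (\Z/2\Z)^{r_1} \to \Cl_2^+(\O) \to \Cl_2(\O) \to 1$ together with the fact that the $2$-torsion of the \emph{ideal} group $\I_2(\O)$ is governed by how the rational primes ramify in $\O$ (equivalently, by the $2$-torsion in the monoid of invertible ideals, which for a cubic order is controlled by the factorization type $3 = e_1f_1 + \cdots$). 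Combining these, $|\Cl_2(\O)|$, $|\Cl_2^+(\O)|$ and $|\I_2(\O)|$ differ only by explicit powers of $2$ depending on $r_1$ (so $r_1 = 3$ in the totally real case, $r_1 = 1$ in the complex case) and on the unit signature rank. So I would first reduce (a), (b), (c) to computing a single average: the mean of $|\I_2(\O)|$ over the relevant family.

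Next I would compute the average of $|\I_2(\O)|$, and more precisely of $|\Cl_2(\O)| - c\,|\I_2(\O)|$, by the geometry-of-numbers / parametrization method. The classical device here is that $2$-torsion ideal classes in a cubic order $\O$ correspond bijectively to suitable equivalence classes of pairs $(\O, I)$ with $I^2 \sim \O$, which are parametrized by orbits of a representation — here $\SL_2(\Z)$ (or $\GL$) acting on pairs of ternary quadratic forms, or equivalently $2\times 2\times 2$ cubes / pairs of binary quadratic forms tied to the cubic ring via the Delone--Faddeev / Gan--Gross--Savin correspondence. One sets up the $\O$-module $I$ with $I\otimes I \to \O$ data as an integral orbit, counts these orbits of bounded discriminant by Bhargava-style averaging over fundamental domains (with the usual cuspidal-region estimates), and thereby obtains the mean of $|\Cl_2(\O)|$. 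The ``trivial'' ideal-group contribution $\frac14|\I_2(\O)|$ (resp.\ $\frac12$, resp.\ $|\I_2(\O)|$ for the narrow group) is exactly the count of the ``reducible'' or ``distinguished'' orbits — those coming from $I$ that are honest ideals already squaring to principal ideals via a $2$-torsion ideal — and the key identity is that the main term of the orbit count splits as (distinguished part) $+$ $1$, the constant $1$ being the contribution of the identity/principal class that survives as a genuine main term rather than being absorbed.

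With those difference-identities in hand, the averages in the theorem follow by a short bookkeeping step: one needs the average of $|\I_2(\O)|$ itself over a family defined by local conditions, which is a purely local Euler-product computation (each prime $p$ contributes a local factor recording the $2$-torsion in the ideal group of $\O\otimes\Z_p$ weighted by the density of cubic rings of each splitting type, à la the Davenport--Heilbronn / Datskovsky--Wright local densities), and one checks this product evaluates so that, after adding the universal constant $1$ and the signature corrections, the means come out to $5/4$, $3/2$, and $2$. The main obstacle I anticipate is \emph{not} the algebra but the analytic uniformity: proving that the count of the relevant ternary-quadratic-form (or cube) orbits of bounded discriminant has a power-saving error term uniform over all families cut out by finitely many local conditions, which requires controlling the contribution of the cusp — points corresponding to non-maximal or ``distinguished'' lattices — and showing that those are captured exactly by the $\frac14|\I_2(\O)|$-type term plus the constant. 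A secondary subtlety is handling all cubic \emph{orders} (not just maximal ones) simultaneously and checking that the claimed averages are genuinely independent of the family, which amounts to verifying that the local factors telescope against the local masses of cubic rings.
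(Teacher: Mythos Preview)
Your second paragraph is roughly on the right track --- the paper indeed parametrizes order-$2$ ideal classes in cubic rings by $G_\Z$-orbits on pairs of integer-matrix ternary quadratic forms (not $2\times 2\times 2$ cubes or pairs of binary quadratics, which pertain to the quadratic case), and the reducible orbits are exactly those with $\delta$ a square in $(R\otimes\Q)^\times$, which does recover $\I_2(R)$.

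But your first paragraph contains a genuine gap, and it is fatal for part~(c). The quantities $|\Cl_2(\O)|$, $|\Cl_2^+(\O)|$, and $|\I_2(\O)|$ are \emph{not} related to one another by explicit powers of~$2$ depending only on $r_1$ and the unit signature rank. For a maximal order, $|\I_2(\O)|=1$ always, while $|\Cl_2(\O)|$ can be arbitrarily large, so $\I_2$ is simply not in the picture you describe. More importantly, the ratio $|\Cl_2^+(\O)|/|\Cl_2(\O)|$ depends on the unit signature rank, and this is \emph{not} a locally determined quantity: its distribution over cubic fields is a priori unknown, and is in fact one of the \emph{outputs} of the theorem (Corollaries~\ref{mixed1}--\ref{mixed2}), not an input. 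So your proposed ``bookkeeping'' reduction of the narrow-class-group average to the class-group average cannot work.

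The paper avoids this circularity entirely. The key observation you are missing is that the real orbit structure of the prehomogeneous vector space already encodes the signature of $\delta$: the three $G_\R$-orbits $V^{(0)}$, $V^{(1)}$, $V^{(2)}$ correspond (for totally real $R$) to $\delta$ totally positive, $R$ complex, and $\delta$ of mixed sign, respectively (Lemma~\ref{lemma1}). One then shows directly that the subgroup $H^+(R)\subset H(R)$ of triples with $\delta\gg 0$ has $|H^+(R)|=|\Cl_2^+(R)|$ (Lemma~\ref{hrp}), so counting absolutely irreducible orbits in $V^{(0)}$ \emph{alone} gives $\sum_R(|\Cl_2^+(R)|-|\I_2(R)|)$, while counting in $V^{(0)}\cup V^{(2)}$ gives $\sum_R(4|\Cl_2(R)|-|\I_2(R)|)$. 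The narrow and ordinary averages are thus obtained from \emph{separate} orbit counts, with no need to know the unit signature distribution in advance. Finally, for Theorem~\ref{cgvsncg} one restricts to maximal orders, where $|\I_2(R)|=1$ identically (Corollary~\ref{maxcase}), so no Euler-product computation of the mean of $|\I_2|$ is needed at all.
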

Thus, the average number of 2-torsion elements in the class groups or narrow class groups of cubic fields of given real signature remains the same, regardless of which family of cubic fields we are averaging over.

In the case where no local specifications are made in Theorem~\ref{cgvsncg} (i.e., where we average over all cubic fields), parts (a) and (b) of Theorem~\ref{cgvsncg} were first proven in \cite{density} by
rather indirect methods, and confirmed a special
case of the Cohen--Martinet--Malle heuristics (see \cite{CM,Malle}). In the current paper, we
generalize these results using a more direct correspondence (see Theorem~\ref{potqfideal}), which additionally allows us to obtain Theorem~\ref{cgvsncg}(c), and indeed Theorems~\ref{cgvsncg}(a)--(c) for general families of cubic fields defined by local conditions at finitely many primes.\footnote{We are not aware of any extensions of the
  Cohen--Lenstra--Martinet--Malle heuristics that may have led to the predictions
  of Theorem~\ref{cgvsncg}(c).  
  It would be interesting to have similar
  Cohen--Lenstra--Martinet--Malle style heuristics that apply more generally to
  narrow class groups as well as to orders.  
  }
Our current method is more direct in the sense that, not only does it
yield a count of all the 2-torsion elements in class groups and narrow class groups 
in various
families of cubic fields, but it in fact gives an explicit
construction of these elements. 

This direct correspondence also allows us to study on average how many ideal classes of order~2 in the class groups of cubic orders arise simply because there exists an 
ideal of order~2 representing~it. Recall that the {\it ideal group} $\cI(\cO)$ of an order $\cO$ is the group of invertible fractional ideals of $\cO$, of which the class group $\Cl(\cO)$ is a quotient. 
If an order $\cO$ is not maximal, then 
it is possible for an \emph{ideal} $I$ of $\cO$ to have order 2, i.e., $I$ can be a fractional ideal of $\cO$ satisfying $I^2 = \cO$ but $I \neq \cO$.\footnote{For maximal orders, it is easy to show that any 2-torsion element (indeed, any torsion element) in the ideal group must be the trivial ideal.} 

For an order $\cO$ in a number field, let $\cI_p(\cO)$ denote the $p$-torsion subgroup of the ideal group~$\cI(\cO)$ of~$\cO$. In \cite[Thm.~8]{clorders}, we showed that the mean value of the difference $|\Cl_3(\cO)| - |\cI_3(\cO)|$ is always equal to 1, regardless of whether one averages over the maximal orders in complex quadratic fields, over all orders in such fields, or even over very general families of complex quadratic orders defined by local conditions. Similarly, for very general families of real quadratic orders defined by local conditions, we showed that the mean value of the difference $|\Cl_3(\cO)| - \frac{1}{3}|\cI_3(\cO)|$ is always equal to 1, independent of the family. 

In this article, we prove the analogues of these results for the
$2$-torsion elements in the class groups and ideal groups of {\it cubic} orders.
To state the result, we require just a bit of notation. For each prime $p$, let $\Sigma_p$ be any set of isomorphism classes of orders in \'etale cubic algebras over $\bQ_p$. We say that the collection $(\Sigma_p)$ of local specifications is \emph{acceptable} if, for all sufficiently large $p$, the set $\Sigma_p$ contains all maximal cubic rings over $\bZ_p$,
or at least all those maximal cubic rings over $\bZ_p$ that are not totally ramified at $p$. Then we prove: 

\begin{theorem}\label{diff}
Let $(\Sigma_p)$ be any acceptable collection of local specifications as above, and let $\Sigma$ denote the set of all isomorphism classes of cubic orders $\cO$ such that $\cO \otimes \bZ_p \in \Sigma_p$ for all $p$. Then, when orders in $\Sigma$ are ordered by their absolute discriminants:
\begin{itemize} 
\item[\rm{(a)}]
The average size of $|\Cl_2(\O)|-\frac{1}{4}|\I_2(\O)|$ for totally real cubic orders
$\O$ in $\Sigma$ is $1$.
\item[\rm{(b)}]
The average size of $|\Cl_2(\O)|-\frac{1}{2}|\I_2(\O)|=|\Cl_2^+(\O)|-\frac{1}{2}|\I_2(\O)|$ for complex cubic orders
$\O$ in~$\Sigma$ is~$1$.
\item[\rm{(c)}]
The average size of $|\Cl^+_2(\O)|-|\I_2(\O)|$ for totally real cubic orders
$\O$ in $\Sigma$ is $1$.
\end{itemize}
\end{theorem}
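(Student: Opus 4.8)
The plan is to mimic the argument of \cite{clorders} for $3$-torsion in quadratic orders, with the parametrization of $3$-torsion by binary cubic forms replaced throughout by the parametrization of $2$-torsion in the (narrow) class groups of cubic orders furnished by Theorem~\ref{potqfideal}: namely, by suitable orbits of pairs of integral ternary quadratic forms, equivalently by quartic rings equipped with a cubic resolvent ring isomorphic to $\O$. This is the same representation $\Z^2\otimes\Sym^2(\Z^3)$ that underlies the counts of quartic rings and fields in \cite{density}, on which parts (a) and (b) of Theorem~\ref{cgvsncg} rest. The three assertions are proved in parallel; they differ only in the archimedean bookkeeping --- which is the source of the constants $\tfrac14$, $\tfrac12$, $1$ --- and in whether one works with $\Cl_2$ or $\Cl_2^+$, recalling that $|\Cl_2(\O)|=|\Cl_2^+(\O)|$ for complex cubic orders since the single real place can be made positive by $-1$.

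Step one is a \emph{pointwise} reduction. Using Theorem~\ref{potqfideal} together with the standard exact sequences relating $\Cl(\O)$, $\Cl^+(\O)$, $\I(\O)$, the unit group $\O^\times$, and the sign map $\O^\times\to\{\pm1\}^{r_1}$, I would express, for each cubic order $\O$, the number of relevant orbits with resolvent ring $\O$ in the form
\[ a_\infty\,|\Cl_2^+(\O)|\;+\;b_\infty\,|\I_2(\O)|\;+\;(\text{a bounded term}), \]
where $a_\infty,b_\infty$ depend only on $\O\otimes\R$ and the bounded term records the distinguished orbits attached to the split \'etale algebra $\O\otimes\Q\times\Q$, i.e.\ to the trivial class. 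The care needed here is in the multiplicity with which a given narrow $2$-torsion class, resp.\ a given $2$-torsion ideal, is represented by an orbit: this multiplicity is governed by squares of units and by the image of the sign map, and averaging over the finitely many possible behaviors of $\O\otimes\R$ --- legitimate, since the real signature is one of the imposed local conditions and the count below is equidistributed over the archimedean types --- is what converts these into the stated rational constants. One then arranges that, after summing over $\O$, the quantity $\sum_{\O\in\Sigma,\,|\Disc\O|<X}\bigl(|\Cl_2(\O)|-c\,|\I_2(\O)|-1\bigr)$, with $c=\tfrac14,\tfrac12,1$ respectively, is exhibited as a difference of two orbit counts of bounded discriminant whose main terms must be shown to cancel.

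Step two, the orbit count, is the crux. The congruence conditions defining $\Sigma$, together with the maximality-type condition built into the definition of \emph{acceptable}, cut out a region in $\Z^2\otimes\Sym^2(\Z^3)$; the count of the \emph{generic} orbits in it --- those corresponding under Theorem~\ref{potqfideal} to orders in $S_4$- or $A_4$-quartic fields, hence to genuine class-group elements --- would be carried out by the geometry-of-numbers method developed for counting quartic rings and fields: reduction theory for this prehomogeneous representation, a volume computation, truncation of the cusp, the maximality sieve needed to incorporate non-maximal quartic rings, and the uniformity estimates in the congruence conditions that allow the tail over the unrestricted primes to be summed --- and it is exactly here that acceptability of $(\Sigma_p)$ makes the governing Euler product converge. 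The remaining \emph{reducible} orbits, which under Theorem~\ref{potqfideal} account for $|\I_2(\O)|$ rather than for class-group elements, are treated separately: they lie on lower-dimensional boundary strata and are counted by a correspondingly simpler volume computation; equivalently, since $\O\mapsto|\I_2(\O)|$ is a product of local factors depending only on $\O\otimes\Z_p$, one may instead evaluate $\sum_{\O}|\I_2(\O)|$ via the cubic-ring (binary cubic form) count and a Tauberian argument.

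Finally, comparing the two asymptotics, the leading constants match, so the weighted difference of orbit counts is $o(X)$ and what survives is essentially the single distinguished orbit per order (the class of $\O\times\Z$), giving
\[ \sum_{\O\in\Sigma,\,|\Disc\O|<X}\bigl(|\Cl_2(\O)|-c\,|\I_2(\O)|\bigr)=\#\{\O\in\Sigma:|\Disc\O|<X\}+o(X); \]
dividing by the number of orders counted yields Theorem~\ref{diff}. I expect the main obstacle to be this geometry-of-numbers step: obtaining the asymptotic count of quartic rings --- including the non-maximal ones --- with prescribed resolvent behavior, with error terms uniform enough in the congruence conditions to permit summation over the infinite family $\Sigma$, while at the same time isolating the reducible orbits and showing that their count reproduces $c\,|\I_2(\O)|$ on the nose. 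The archimedean comparison of step one, although responsible for the precise constants, should be comparatively routine.
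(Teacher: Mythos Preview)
Your overall strategy---parametrize via Theorem~\ref{potqfideal}, count orbits by the methods of \cite{density}, use acceptability to pass to infinitely many local conditions, and divide by the $\Sigma$-order count---matches the paper's. Where your proposal goes astray is Step one. In the paper the pointwise reduction is \emph{exact}, not asymptotic: for each $S_3$-cubic order $R$, the projective orbits with resolvent $R$ form a group $H(R)$, and short exact sequences give $|H(R)|=4|\Cl_2(R)|$ (totally real) or $2|\Cl_2(R)|$ (complex), while the subgroup $H^+(R)\subset H(R)$ with $\delta\gg0$ satisfies $|H^+(R)|=|\Cl_2^+(R)|$. The orbits with a common $\Q$-rational zero are shown to be \emph{precisely} those with $\delta$ a square in $(R\otimes\Q)^\times$, and these are in bijection with $\I_2(R)$ (the trivial ideal $R$ included---so there is no separate ``bounded term'' for the identity class). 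Hence the count of absolutely irreducible orbits in $V^{(0)}\cup V^{(2)}$, resp.\ $V^{(1)}$, resp.\ $V^{(0)}$, equals $\sum_R(4|\Cl_2|-|\I_2|)$, resp.\ $\sum_R(2|\Cl_2|-|\I_2|)$, resp.\ $\sum_R(|\Cl_2^+|-|\I_2|)$ on the nose. The constants $4,2,1$ are not obtained by ``averaging over the finitely many possible behaviors of $\O\otimes\R$'' (for fixed signature there is only one such behavior); they are exact group-theoretic identities valid for every individual order, and there is accordingly no need to count reducible orbits by a separate volume or Tauberian computation---they are removed pointwise.

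Your final interpretation is also off: the ``$1$'' in the average does not come from a surviving distinguished orbit $\O\times\Z$. The absolutely irreducible orbit count has leading term $\frac{2}{n_i}c_\Sigma X$ (with $n_0=24$, $n_1=4$, $n_2=8$), and the $\Sigma$-order count has leading term $\frac{1}{12}c_\Sigma X$ or $\frac{1}{4}c_\Sigma X$; the same Euler product $c_\Sigma$ appears in both and cancels, and the ratio of the remaining numerical constants is exactly $4$, $2$, or $1$, matching the factor relating $|H|$ to $|\Cl_2|$ or $|\Cl_2^+|$. Dividing through yields the average $1$. There are no ``two orbit counts whose main terms cancel''; there is one irreducible-orbit count and one order count, and their ratio is the average in question.
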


It is rather intriguing that the mean values in Theorem~\ref{diff}  
remain the same regardless of the choice $(\Sigma_p)$ of acceptable local specifications for the cubic orders being averaged over. If $(\Sigma_p)$ is acceptable and each $\Sigma_p$ consists only of {\it maximal} cubic rings over $\Z_p$, then we recover (a more general version of) Theorem~\ref{cgvsncg}, since the only 2-torsion element of the ideal group of a maximal order is the trivial~ideal.

Theorem~\ref{cgvsncg} immediately implies that most cubic fields
have odd class number.  More precisely:

\begin{corollary}\label{odd} 
\begin{itemize}
\item[$(1)$]
A positive proportion $($at least $75\%)$ of totally real cubic fields have odd class number.
\item[$(2)$]
A positive proportion $($at least $50\%)$ of complex cubic fields have odd class number.
\end{itemize}
\end{corollary}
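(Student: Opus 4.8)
The plan is to deduce Corollary~\ref{odd} directly from Theorem~\ref{cgvsncg}(a) and (b) by an elementary averaging argument, using only the fact that $|\Cl_2(K)|$ is always a power of $2$: it equals $1$ precisely when the class number $h(K)$ is odd, and otherwise it is at least $2$.

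For part (1), I would fix $X > 0$, let $g(X)$ be the number of isomorphism classes of totally real cubic fields $K$ with $|\disc K| < X$, and let $f(X)$ be the number of these with $h(K)$ odd. Splitting the sum of $|\Cl_2(K)|$ over this family according to whether $h(K)$ is odd or even, and using $|\Cl_2(K)| = 1$ in the former case and $|\Cl_2(K)| \ge 2$ in the latter, gives
$$\sum_{|\disc K| < X} |\Cl_2(K)| \;\ge\; f(X) + 2\bigl(g(X) - f(X)\bigr) \;=\; 2g(X) - f(X).$$
Dividing by $g(X)$ and letting $X \to \infty$, the left-hand side tends to $5/4$ by Theorem~\ref{cgvsncg}(a) (applied with no local conditions imposed), so $5/4 \ge 2 - \liminf_{X\to\infty} f(X)/g(X)$, i.e.\ $\liminf_{X\to\infty} f(X)/g(X) \ge 3/4$. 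Part (2) is proved identically: by Theorem~\ref{cgvsncg}(b) the average of $|\Cl_2(K)|$ over complex cubic fields is $3/2$, and the same inequality now yields $\liminf_{X\to\infty} f(X)/g(X) \ge 2 - 3/2 = 1/2$.

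I do not expect any genuine obstacle here: the entire content is Theorem~\ref{cgvsncg}, and the rest is a one-line Markov-type bound. The only subtlety worth flagging is that Theorem~\ref{cgvsncg} asserts convergence of the \emph{average} of $|\Cl_2(K)|$ rather than of the proportion of fields with odd class number, so the conclusion is naturally phrased with a $\liminf$ — equivalently, as the assertion that a positive proportion (indeed at least $75\%$, resp.\ at least $50\%$) of such fields have odd class number, which is exactly the statement of the corollary. If one wanted honest densities in place of lower bounds on the $\liminf$, one would need the finer statement that the proportion of fields of each fixed $2$-rank converges, which is not required for the corollary as stated.
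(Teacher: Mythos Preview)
Your proof is correct and is essentially the same argument as the paper's. The paper phrases it by contradiction---assuming the lower density of fields with odd class number is below $3/4$ (resp.\ $1/2$) and deducing that the limsup of the average of $|\Cl_2|$ exceeds $5/4$ (resp.\ $3/2$)---while you write the equivalent direct inequality $\sum|\Cl_2(K)|\ge 2g(X)-f(X)$ and pass to the limit; both are the same Markov-type bound from Theorem~\ref{cgvsncg}(a),(b).
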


We now compare the class group with the narrow class
group.  For orders $\O$ in number fields with predominantly real
places, such as totally real number fields, it is natural to expect
the narrow class group to be strictly larger than the class group,
since one is forming a quotient of $I_\O$ by principal ideals
satisfying much more stringent conditions.  In the quadratic case,
this expectation is supported by:

\begin{theorem} 
  When ordered by their discriminants, a density of $100\%$ of real
  quadratic fields~$F$ satisfy $\Cl(F) \ne \Cl^+(F)$.\footnote{This is
    not to say that {\it all} real quadratic fields satisfy $\Cl(F)
    \ne \Cl^+(F)$; indeed, there are are also infinitely many
    quadratic fields for which $\Cl(F)=\Cl^+(F)$, but asymptotically
    such fields have density zero in the set of all quadratic fields.
The 0\% of quadratic fields satisfying
  $\Cl(F)=\Cl^+(F)$ are actually very interesting; see, for example,
  the compelling heuristics of Stevenhagen~\cite{S} on such fields and the
  beautiful recent work of Fouvry and Kl\"uners~\cite{FK} in this regard.}
\end{theorem}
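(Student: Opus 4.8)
The plan is to reduce this classical statement to the standard criterion for $\Cl(F)=\Cl^+(F)$ in terms of the fundamental unit, together with an elementary sieve estimate. Write $\varepsilon_F$ for the fundamental unit of $F$ and $D=\disc F>0$. The starting point is the exact sequence
$$1\longrightarrow \{\pm 1\}^2/\mathrm{im}(\mathrm{sgn})\longrightarrow \Cl^+(F)\longrightarrow \Cl(F)\longrightarrow 1,$$
in which $\mathrm{sgn}$ is the map sending a unit to its pair of archimedean signs. Its image always contains $\mathrm{sgn}(-1)=(-1,-1)$, and it equals all of $\{\pm 1\}^2$ exactly when $F$ contains a unit of norm $-1$ (equivalently, a unit of mixed real signature), and has order $2$ otherwise. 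Hence $\Cl(F)=\Cl^+(F)$ if and only if $N(\varepsilon_F)=-1$, and the theorem becomes the assertion that only a density-zero set of real quadratic fields possess a unit of norm $-1$.

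Next I would invoke the classical local obstruction: if some $\alpha$ in the ring of integers of $F$ satisfies $N(\alpha)=-1$, then writing $2\alpha=a+b\sqrt D$ with $a,b\in\Z$ gives $a^2-Db^2=-4$, and reducing modulo any odd prime $p\mid D$ shows that $-1$ is a square modulo $p$, so $p\equiv 1\pmod 4$. Thus every real quadratic field with $\Cl(F)=\Cl^+(F)$ has the property that $\disc F$ has no prime factor congruent to $3$ modulo $4$.

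It remains to observe that the fundamental discriminants with this property are sparse. The number of positive fundamental discriminants $D\le X$ is $\sim \tfrac{3}{\pi^2}X$, which grows linearly in $X$; on the other hand, since the primes $p\equiv 3\pmod 4$ have Dirichlet density $\tfrac12$ (equivalently $\sum_{p\equiv 3\ (4)}p^{-1}=\infty$), the classical estimate of Landau---or Wirsing's theorem on multiplicative functions---bounds the number of integers $\le X$ with no prime factor $\equiv 3\pmod 4$ by $O\!\left(X(\log X)^{-1/2}\right)=o(X)$. Dividing shows that the proportion of real quadratic fields $F$ with $\disc F\le X$ and $\Cl(F)=\Cl^+(F)$ tends to $0$, which is the claim. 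There is no genuine obstacle in this argument: all three ingredients are standard, and the only places needing a little care are the exact form of the sequence in the first step (here simplified by the fact that we work with the maximal order, so no conductor terms intervene) and the precise analytic input in the last.
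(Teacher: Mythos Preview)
Your argument is correct and is exactly the standard one. In the paper this theorem is not proved at all: it is simply stated as well known, with the remark that ``for most $d$ there are local obstructions to having a solution to the negative norm unit equation $x^2-dy^2=-4$.'' Your proof is precisely a fleshed-out version of that hint---reduce $\Cl(F)=\Cl^+(F)$ to $N(\varepsilon_F)=-1$, observe the obstruction at primes $p\equiv 3\pmod 4$ dividing the discriminant, and then apply Landau's $O(X/\sqrt{\log X})$ estimate---so there is nothing to compare.
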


In fact, since in the real quadratic case the size of the narrow class
group is either the same as or twice the size of the class group, 
it is natural to try and distinguish these two groups 
just based on the sizes of their 2-torsion subgroups.  
We have:

\begin{theorem} \label{quad2}
  When ordered by their discriminants, a density of $100\%$ of real
  quadratic fields~$F$ satisfy $\Cl_2(F) \ne \Cl_2^+(F)$.
\end{theorem}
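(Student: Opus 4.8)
The plan is to reduce Theorem~\ref{quad2} to a genus-theoretic criterion together with an elementary sieve estimate. Throughout write $F = \Q(\sqrt d)$ for a real quadratic field with fundamental discriminant $d$, let $\varepsilon$ denote its fundamental unit, and let $t$ be the number of prime divisors of $d$, so that narrow genus theory gives $|\Cl_2^+(F)| = |\Cl^+(F)/\Cl^+(F)^2| = 2^{t-1}$. The natural surjection $\Cl^+(F) \to \Cl(F)$ has kernel generated by $c := [(\sqrt d)]^+$, which has order $1$ when $N(\varepsilon) = -1$ and order $2$ when $N(\varepsilon) = +1$; moreover $\Cl(F)/\Cl(F)^2 = \bigl(\Cl^+(F)/\Cl^+(F)^2\bigr)\big/\langle c \bmod \Cl^+(F)^2\rangle$, so when $N(\varepsilon) = +1$ the order $|\Cl_2(F)| = |\Cl(F)/\Cl(F)^2|$ equals $2^{t-1}$ or $2^{t-2}$ according as $c$ does or does not lie in $\Cl^+(F)^2$. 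Hence $|\Cl_2^+(F)| = 2\,|\Cl_2(F)|$ precisely when $N(\varepsilon) = +1$ and $c \notin \Cl^+(F)^2$.

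The main step is to evaluate the quadratic characters of $\Cl^+(F)$ on $c$. Since $\Cl^+(F)^2 = \bigcap_\chi \ker\chi$ as $\chi$ runs over these characters, and since they are exactly the genus characters $\chi_{d_1,d_2}$ indexed by factorizations $d = d_1 d_2$ into fundamental discriminants, it is enough to compute $\chi_{d_1,d_2}(c)$. Under class field theory $\chi_{d_1,d_2}$ corresponds to the extension $\Q(\sqrt d, \sqrt{d_1})/\Q(\sqrt d)$, which is unramified at all finite primes, while $c$ is the image under the Artin map of the idele that is $-1$ at one archimedean place $v$ of $F$ and $1$ elsewhere. The local Artin symbol at $v$ is trivial if $v$ is unramified in $\Q(\sqrt d, \sqrt{d_1})$ and is complex conjugation if $v$ ramifies there, i.e.\ according as $\Q(\sqrt{d_1})$ is real or imaginary. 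Thus $\chi_{d_1,d_2}(c) = \mathrm{sgn}(d_1)$, and therefore $c \notin \Cl^+(F)^2$ exactly when $d$ can be written as a product of two negative fundamental discriminants.

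From this I extract a clean sufficient condition: if some prime $p \equiv 3 \pmod 4$ divides $d$, then $|\Cl_2^+(F)| = 2\,|\Cl_2(F)|$, and in particular $\Cl_2(F) \ne \Cl_2^+(F)$. Indeed, $N(\varepsilon) = +1$ for such $d$ (a classical necessary condition for $N(\varepsilon) = -1$ being that no prime $\equiv 3 \pmod 4$ divide $d$), and the factorization $d = (-p)\cdot(-d/p)$ expresses $d$ as a product of two negative fundamental discriminants, because a positive fundamental discriminant has an even number of negative prime-discriminant factors, so removing the one at $p$ leaves an odd number. Hence $c \notin \Cl^+(F)^2$ by the previous paragraph.

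It remains only to record that a density-$100\%$ set of real quadratic fields has discriminant divisible by some prime $\equiv 3 \pmod 4$: the positive fundamental discriminants up to $X$ all of whose odd prime factors are $\equiv 1 \pmod 4$ number $O(X/\sqrt{\log X})$ by a standard application of Landau's theorem (or the Selberg--Delange method), hence form a density-zero subset of the fundamental discriminants up to $X$, which are of order $X$ in number. Together with the previous paragraph this proves the theorem. The one step demanding genuine care is the genus-character evaluation $\chi_{d_1,d_2}(c) = \mathrm{sgn}(d_1)$; this, however, is classical, being essentially built into Gauss's theory of genera, so I anticipate no serious obstacle.
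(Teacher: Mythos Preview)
Your proof is correct. The paper does not actually supply a proof of Theorem~\ref{quad2}; it only remarks that Theorems~4 and~5 are ``well-known and easily proven,'' citing local obstructions to the negative Pell equation $x^2 - dy^2 = -4$ together with genus theory as the underlying reasons. Your argument is precisely a careful execution of those two hints: the local obstruction at a prime $p\equiv 3\pmod 4$ dividing $d$ forces $N(\varepsilon)=+1$, while the genus-character evaluation $\chi_{d_1,d_2}(c)=\mathrm{sgn}(d_1)$ (which is indeed classical) shows that the class $c=[(\sqrt d)]^+$ lies outside $\Cl^+(F)^2$, so that the $2$-ranks of $\Cl^+(F)$ and $\Cl(F)$ differ. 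The Landau-type density estimate for discriminants avoiding primes $\equiv 3\pmod 4$ is standard. In short, your proof fills in exactly the argument the paper gestures toward.
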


Now recall that for any number field     
$L$ with $r$ distinct real embeddings, there is a         
{\it signature} homomorphism $\mbox{sgn}: \cO_L^\times \rightarrow \{\pm 1\}^r$ that takes    
a {\it unit} of $L$---i.e., an invertible element of the ring of integers $\cO_L$ of $L$---to its sign (one sign for each of the real embeddings $\sigma: L \rightarrow     
\bR$); a unit $u$ has \emph{mixed signature} if $\mbox{sgn}(u) 
\neq \mbox{sgn}(\pm1)$,
i.e., if there exist two      
real embeddings $\sigma_{+},\sigma_{-}: L \rightarrow \bR$ such that                    
$\sigma_{\pm}(u) = \pm 1$.   Then, in terms of units of quadratic fields, we have:                                                         
                                                                                        
\begin{corollary}\label{quadmixed}
When ordered by their discriminants, a density of $0\%$ of real
quadratic fields have a unit of mixed signature.
\end{corollary}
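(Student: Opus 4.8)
The plan is to deduce this from Theorem \ref{quad2} by making the standard link between units of mixed signature and the comparison between the class group and the narrow class group. For a real quadratic field $F$ with ring of integers $\cO_F$, there is an exact sequence relating $\Cl(F)$, $\Cl^+(F)$, and the cokernel of the signature map $\mathrm{sgn}\colon \cO_F^\times \to \{\pm 1\}^2$. Concretely, the natural surjection $\Cl^+(F) \twoheadrightarrow \Cl(F)$ has kernel isomorphic to $\{\pm 1\}^2 / \mathrm{sgn}(\cO_F^\times)$. Since $-1 \in \cO_F^\times$ always, the image $\mathrm{sgn}(\cO_F^\times)$ always contains the diagonal element $(-1,-1)$, so the cokernel $\{\pm 1\}^2/\mathrm{sgn}(\cO_F^\times)$ has order $1$ or $2$; it has order $1$ precisely when $\mathrm{sgn}(\cO_F^\times) = \{\pm 1\}^2$, i.e., precisely when $F$ possesses a unit of mixed signature (equivalently, when the fundamental unit has norm $-1$, together with $-1$, generates all four sign patterns). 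Thus $F$ has a unit of mixed signature if and only if $\Cl^+(F) \cong \Cl(F)$ via the natural map, which in turn forces $\Cl_2^+(F) \cong \Cl_2(F)$.

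First I would record this dichotomy carefully: if $F$ has a unit of mixed signature then the natural surjection $\Cl^+(F)\to\Cl(F)$ is an isomorphism, hence in particular $\Cl_2^+(F) = \Cl_2(F)$; contrapositively, if $\Cl_2^+(F)\ne\Cl_2(F)$ then $F$ has no unit of mixed signature. Next I would invoke Theorem \ref{quad2}, which states that a density of $100\%$ of real quadratic fields $F$ (ordered by discriminant) satisfy $\Cl_2(F)\ne\Cl_2^+(F)$. Combining the two statements immediately yields that a density of $100\%$ of real quadratic fields have no unit of mixed signature, i.e., a density of $0\%$ do have such a unit, which is exactly the assertion of the corollary.

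There is essentially no obstacle here; the only point requiring a little care is the group-theoretic bookkeeping in the first paragraph, namely identifying the kernel of $\Cl^+(F)\to\Cl(F)$ with $\{\pm1\}^2/\mathrm{sgn}(\cO_F^\times)$ and checking that this cokernel is trivial exactly when a mixed-signature unit exists. This is classical: it follows from the snake lemma applied to the diagram comparing $\cI(\cO_F)/P(\cO_F)^+$ and $\cI(\cO_F)/P(\cO_F)$, using that $P(\cO_F)/P(\cO_F)^+ \cong (F^\times/F^{\times,+}) / \mathrm{image}(\cO_F^\times) \cong \{\pm1\}^2/\mathrm{sgn}(\cO_F^\times)$ since the totally positive elements of $F^\times$ have index $4$ in $F^\times$ at the two real places. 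Once this identification is in hand, the corollary is a formal consequence of Theorem \ref{quad2}.
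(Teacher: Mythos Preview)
Your proposal is correct and matches the paper's intended approach: the corollary is placed immediately after Theorem~\ref{quad2} precisely because it follows from it via the standard identification of $\ker(\Cl^+(F)\to\Cl(F))$ with $\{\pm1\}^2/\mathrm{sgn}(\cO_F^\times)$, exactly as you outline. The paper does not spell out this deduction but simply remarks that the underlying phenomenon is the generic local obstruction to the negative norm unit equation $x^2-dy^2=-4$; your argument is the formal version of that same observation.
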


Thus, at least asymptotically, we completely understand the question
of how often the class group and narrow class group differ for real
quadratic fields. Namely, 100\% of the time, the narrow class group
looks like the class group with one extra 2-torsion
factor.

Theorems 4 and 5 are well-known and easily proven; they are true
primarily because for most~$d$ there are local obstructions to having
a solution to the negative norm unit equation $x^2-dy^2=-4$.  Also,
the 2-torsion elements in class groups and narrow class groups in the
quadratic case are mostly governed by genus theory.  One of the
reasons Cohen and Lenstra did not formulate heuristics for narrow
class groups in the quadratic case is that these groups differ from
class groups of quadratic fields only in their 2-parts, which are
related to genus theory.
From this point of view, when computing narrow class groups vs.\ class
groups, it is natural to try and investigate a case where 2 is
unrelated to genus theory---for example, the case of {\it cubic}
fields, to which we now turn.

A classical theorem of Armitage and Frohlich~\cite{AF} states that
\begin{equation*}\label{afeq}
\dim_{\Z/2\Z}(\Cl_2^+(K))-\dim_{\Z/2\Z}(\Cl_2(K))\leq 1
\end{equation*}
 for cubic fields $K$.  Thus
Theorems~\ref{cgvsncg}(a) and~\ref{cgvsncg}(c), when taken together, immediately
imply:

\begin{corollary}\label{nocontrast}
  A positive proportion $($at least $50\%)$ of totally real cubic
  fields $K$ satisfy $\Cl_2(K)\neq \Cl_2^+(K)$.
\end{corollary}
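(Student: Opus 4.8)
The plan is to deduce the corollary from Theorems~\ref{cgvsncg}(a) and~\ref{cgvsncg}(c) by feeding both averages into a single pointwise inequality, using the Armitage--Frohlich bound only to restrict the possible values of the ratio $|\Cl_2^+(K)|/|\Cl_2(K)|$.

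First I would record an elementary structural fact. For a totally real cubic field $K$, the natural surjection $\Cl^+(K)\twoheadrightarrow\Cl(K)$ has kernel a quotient of $\{\pm1\}^3$, hence a finite abelian $2$-group; applying $-\otimes_{\Z}\Z/2\Z$ to this surjection yields a surjection $\Cl^+(K)/2\Cl^+(K)\twoheadrightarrow\Cl(K)/2\Cl(K)$, so $\dim_{\Z/2\Z}\Cl_2^+(K)\ge\dim_{\Z/2\Z}\Cl_2(K)$. Combined with the Armitage--Frohlich inequality $\dim_{\Z/2\Z}\Cl_2^+(K)-\dim_{\Z/2\Z}\Cl_2(K)\le 1$, this shows that for every totally real cubic $K$ exactly one of two things happens: either $|\Cl_2^+(K)|=|\Cl_2(K)|$, or $|\Cl_2^+(K)|=2|\Cl_2(K)|$. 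In the second case the two groups have different orders, so in particular $\Cl_2(K)\ne\Cl_2^+(K)$.

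Now set $f(K):=2|\Cl_2(K)|-|\Cl_2^+(K)|$. By the dichotomy above, $f(K)\ge 0$ for every totally real cubic $K$, with $f(K)=|\Cl_2(K)|\ge 1$ precisely when $|\Cl_2^+(K)|=|\Cl_2(K)|$ and $f(K)=0$ otherwise; thus $f$ is at least the indicator function of the set $\{K:|\Cl_2^+(K)|=|\Cl_2(K)|\}$. Averaging $f$ over totally real cubic fields ordered by absolute discriminant, linearity of the average together with Theorems~\ref{cgvsncg}(a) and~\ref{cgvsncg}(c) gives mean value $2\cdot\frac54-2=\frac12$. Since $f$ dominates that indicator and is nonnegative everywhere, the upper density of the set $\{K:|\Cl_2^+(K)|=|\Cl_2(K)|\}$ is at most $\frac12$ (formally: the $\limsup$ of the running averages of the indicator is bounded by the mean of $f$, which is legitimate as $f\ge0$). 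Hence a density of at least $50\%$ of totally real cubic fields satisfy $|\Cl_2^+(K)|=2|\Cl_2(K)|$, and therefore $\Cl_2(K)\ne\Cl_2^+(K)$, as claimed.

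Once Theorem~\ref{cgvsncg} is granted there is no genuinely hard step; the one point requiring thought is the choice of linear combination. The naive quantity $|\Cl_2^+(K)|-|\Cl_2(K)|$ is also nonnegative, with mean $\frac34$, but it bounds \emph{above} the density of fields with $\Cl_2(K)\ne\Cl_2^+(K)$ --- the wrong direction. The fix is to bound the complementary event instead via $2|\Cl_2(K)|-|\Cl_2^+(K)|$, whose nonnegativity is exactly what Armitage--Frohlich supplies. The only remaining nuisance is the routine bookkeeping needed to pass from a pointwise inequality to the corresponding inequality of densities in the family.
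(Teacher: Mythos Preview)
Your proof is correct and is essentially the same as the paper's: both use the Armitage--Fr\"ohlich dichotomy and the linear combination $2|\Cl_2(K)|-|\Cl_2^+(K)|$, with the paper phrasing the argument by contradiction while you give the direct form. Your explicit justification that $\dim_{\Z/2\Z}\Cl_2^+(K)\ge\dim_{\Z/2\Z}\Cl_2(K)$ (via right-exactness of $-\otimes\Z/2\Z$) is a nice addition, since the paper simply asserts the full dichotomy as part of Armitage--Fr\"ohlich.
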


\begin{corollary}\label{contrast}
  A positive proportion $($at least $25\%)$ of totally real cubic
  fields $K$ satisfy $\Cl_2(K)= \Cl_2^+(K)$.
\end{corollary}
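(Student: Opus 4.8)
The plan is to deduce this purely formally from Theorem~\ref{cgvsncg}(a),(c) together with the Armitage--Frohlich inequality displayed above, running the same kind of averaging argument that yields Corollary~\ref{nocontrast} but ``in the other direction.'' First I would record the precise shape of the comparison between $\Cl_2(K)$ and $\Cl_2^+(K)$ for a totally real cubic field $K$. The natural surjection $\Cl^+(K)\twoheadrightarrow\Cl(K)$ induces a surjection $\Cl^+(K)/\Cl^+(K)^2\twoheadrightarrow\Cl(K)/\Cl(K)^2$, so $\dim_{\Z/2\Z}\Cl_2^+(K)\ge\dim_{\Z/2\Z}\Cl_2(K)$, while the Armitage--Frohlich bound gives $\dim_{\Z/2\Z}\Cl_2^+(K)\le\dim_{\Z/2\Z}\Cl_2(K)+1$. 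Hence $|\Cl_2^+(K)|$ is equal to either $|\Cl_2(K)|$ or $2|\Cl_2(K)|$; reading the equality $\Cl_2(K)=\Cl_2^+(K)$ as an equality of the relevant $\Z/2\Z$-vector spaces, equivalently as $|\Cl_2(K)|=|\Cl_2^+(K)|$, the first alternative holds exactly when $\Cl_2(K)=\Cl_2^+(K)$.

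Next I would set up the average. Order the family of all totally real cubic fields by absolute discriminant and put $d(K):=|\Cl_2^+(K)|-|\Cl_2(K)|$. By the previous paragraph $d(K)\ge 0$ always, $d(K)=0$ precisely for the fields $K$ with $\Cl_2(K)=\Cl_2^+(K)$, and $d(K)=|\Cl_2(K)|\ge 1$ for every remaining field. Applying Theorem~\ref{cgvsncg}(a) and Theorem~\ref{cgvsncg}(c) with no local conditions imposed, the averages of $|\Cl_2(K)|$ and of $|\Cl_2^+(K)|$ over this family exist and equal $5/4$ and $2$ respectively; since both limits exist, the average of $d(K)$ exists and equals $2-5/4=3/4$.

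Finally I would convert the average of $d$ into a density statement. Let $N(X)$ be the number of totally real cubic fields of absolute discriminant less than $X$, let $B(X)$ be the number of those with $d(K)\ge 1$, and let $G(X)=N(X)-B(X)$ be the number with $d(K)=0$. Since each field counted by $B(X)$ contributes at least $1$ to $\sum_{|\Disc(K)|<X}d(K)$ and the remaining fields contribute $0$, we obtain $B(X)\le\sum_{|\Disc(K)|<X}d(K)$; dividing by $N(X)$ and letting $X\to\infty$ gives $\limsup_X B(X)/N(X)\le 3/4$, hence $\liminf_X G(X)/N(X)\ge 1/4$, which is exactly the assertion. (The symmetric computation applied to $2|\Cl_2(K)|-|\Cl_2^+(K)|$, whose average is $2\cdot\tfrac54-2=\tfrac12$ and which is $\ge 1$ precisely on the fields with $\Cl_2(K)=\Cl_2^+(K)$, recovers Corollary~\ref{nocontrast}.)

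There is essentially no analytic obstacle here: all the substantive work is contained in Theorem~\ref{cgvsncg}, and the deduction is a short exercise in the linearity of averages. The only points demanding (minor) care are the interpretation of the equality $\Cl_2(K)=\Cl_2^+(K)$ and the elementary observation that the natural map forces $\dim_{\Z/2\Z}\Cl_2^+(K)\ge\dim_{\Z/2\Z}\Cl_2(K)$, which together with Armitage--Frohlich pins the ratio $|\Cl_2^+(K)|/|\Cl_2(K)|$ to lie in $\{1,2\}$; without the lower bound on dimensions one would control $d(K)$ from only one side.
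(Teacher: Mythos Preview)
Your argument is correct and is essentially the paper's own proof, only written out more explicitly: both use the dichotomy $|\Cl_2^+(K)|\in\{|\Cl_2(K)|,2|\Cl_2(K)|\}$ (from Armitage--Fr\"ohlich plus the surjection $\Cl^+\twoheadrightarrow\Cl$) together with the fact that the average of $|\Cl_2^+(K)|-|\Cl_2(K)|$ equals $2-\tfrac54=\tfrac34$, and conclude that the ``bad'' set has upper density at most $\tfrac34$. Your added remark justifying $\dim_{\Z/2\Z}\Cl_2^+(K)\ge\dim_{\Z/2\Z}\Cl_2(K)$ via the induced surjection on $2$-quotients is a nice clarification the paper leaves implicit.
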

Note that Corollary~\ref{contrast} for cubic fields is in stark
contrast to Theorem~\ref{quad2} for quadratic fields.  

As far as the existence of units of mixed type in cubic fields, we
have:

\begin{corollary}\label{mixed1}
A positive proportion $($at least $50\%)$ of totally real cubic fields
$K$ do not possess units of every possible signature. 
\end{corollary}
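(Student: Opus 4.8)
The plan is to deduce this directly from Corollary~\ref{nocontrast}, using the standard exact sequence relating the units of $K$, the signature map, and the narrow and ordinary class groups. First I would recall that for a totally real cubic field $K$, with signature homomorphism $\mathrm{sgn}\colon \cO_K^\times \to \{\pm1\}^3$, there is an exact sequence
\begin{equation*}
1 \longrightarrow \cO_{K,+}^\times \longrightarrow \cO_K^\times \stackrel{\mathrm{sgn}}{\longrightarrow} \{\pm1\}^3 \longrightarrow \Cl^+(K) \longrightarrow \Cl(K) \longrightarrow 1,
\end{equation*}
where $\cO_{K,+}^\times$ is the group of totally positive units and the map $\{\pm1\}^3 \to \Cl^+(K)$ sends a sign vector $\epsilon$ to the narrow ideal class of any principal ideal generated by an element of signature $\epsilon$. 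In particular, $K$ possesses a unit of every possible signature exactly when $\mathrm{sgn}$ is surjective, which holds exactly when the connecting map $\{\pm1\}^3 \to \Cl^+(K)$ is trivial, i.e.\ exactly when the natural surjection $\Cl^+(K) \twoheadrightarrow \Cl(K)$ is an isomorphism.

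Next I would observe that if $K$ has a unit of every possible signature, then $\Cl^+(K) = \Cl(K)$, and hence also $\Cl^+_2(K) = \Cl_2(K)$. Taking the contrapositive, every totally real cubic field $K$ with $\Cl_2(K) \ne \Cl^+_2(K)$ fails to possess a unit of every possible signature. Thus the set of totally real cubic fields lacking a unit of every possible signature contains the set of those satisfying $\Cl_2(K) \ne \Cl^+_2(K)$; by Corollary~\ref{nocontrast} the latter set makes up at least $50\%$ of all totally real cubic fields (ordered by absolute discriminant), so the former does as well, which is precisely the assertion.

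I do not anticipate any genuine obstacle: this is a short formal consequence of Theorems~\ref{cgvsncg}(a) and~\ref{cgvsncg}(c) (via Corollary~\ref{nocontrast} and the Armitage--Frohlich inequality), requiring no new density estimates. The only points that need care are the translation, through the displayed exact sequence, between ``$K$ has a unit of every possible signature'' and ``$\Cl^+(K) = \Cl(K)$'', and the elementary fact that this equality of class groups forces equality of their $2$-torsion subgroups. (Chasing the same exact sequence one checks that the kernel of the natural map $\Cl^+_2(K) \to \Cl_2(K)$ is exactly $\{\pm1\}^3/\mathrm{sgn}(\cO_K^\times)$, so the inclusion of sets above is in fact an equality; but only the inclusion is used.)
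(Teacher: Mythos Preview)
Your proposal is correct and follows essentially the same route as the paper's proof: both argue that if $\Cl_2(K)\neq\Cl_2^+(K)$ then $\Cl^+(K)\neq\Cl(K)$, hence $K$ cannot have units of every signature, and then invoke Corollary~\ref{nocontrast}. You simply spell out the underlying exact sequence in more detail than the paper does, which is fine.
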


\begin{corollary}\label{mixed2}
A positive proportion $($at least $75\%)$ of totally real cubic fields
$K$ possess units of mixed signature. 
\end{corollary}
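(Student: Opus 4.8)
The plan is to reduce the statement to the class-number statistics already in hand, via the observation that a totally real cubic field with \emph{no} unit of mixed signature must have even class number. To see this, I would start from the standard exact sequence relating the narrow and ordinary class groups of a totally real cubic field $K$. Write $\mathrm{sgn}\colon \O_K^\times \to \{\pm 1\}^3$ for the signature homomorphism on units and put $C := \{\pm1\}^3/\mathrm{sgn}(\O_K^\times)$. Then
$$1 \longrightarrow C \longrightarrow \Cl^+(K) \longrightarrow \Cl(K) \longrightarrow 1,$$
where the left-hand map sends the class of $\epsilon \in \{\pm1\}^3$ to the class of any principal ideal $a\O_K$ with $\mathrm{sgn}(a) = \epsilon$; this is well defined because changing $a$ by a unit alters $\mathrm{sgn}(a)$ only by an element of $\mathrm{sgn}(\O_K^\times)$, and it is injective because $a\O_K$ is trivial in $\Cl^+(K)$ exactly when $a$ can be chosen totally positive. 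Since $C$ is an elementary abelian $2$-group, it lies inside $\Cl_2^+(K)$, so $\dim_{\F_2}\Cl_2^+(K) \ge \dim_{\F_2} C$.

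Next I would unwind the hypothesis. The group $\mathrm{sgn}(\O_K^\times)$ always contains $\mathrm{sgn}(-1) = (-1,-1,-1)$, and by definition a unit of mixed signature is one whose image is distinct from $\pm(1,1,1)$; thus $K$ has no unit of mixed signature precisely when $\mathrm{sgn}(\O_K^\times) = \{(1,1,1),(-1,-1,-1)\}$, i.e.\ when $|C| = 8/|\mathrm{sgn}(\O_K^\times)| = 4$. In that case $\dim_{\F_2} C = 2$, hence $\dim_{\F_2}\Cl_2^+(K) \ge 2$, and the Armitage--Frohlich inequality $\dim_{\Z/2\Z}\Cl_2^+(K) - \dim_{\Z/2\Z}\Cl_2(K) \le 1$ for cubic fields (\cite{AF}) then forces $\dim_{\F_2}\Cl_2(K) \ge 1$; that is, the class number of $K$ is even.

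It follows that the totally real cubic fields with no unit of mixed signature form a subset of the totally real cubic fields with even class number. By Corollary~\ref{odd}(1) --- equivalently, directly from Theorem~\ref{cgvsncg}(a), since $|\Cl_2(K)| - 1 \ge 1$ whenever the class number is even while the average of $|\Cl_2(K)| - 1$ equals $1/4$ --- the latter set has density at most $25\%$. Hence at least $75\%$ of totally real cubic fields possess a unit of mixed signature, and this holds whether or not one imposes the local conditions in Theorem~\ref{cgvsncg}. The substance of the argument is concentrated in the first two steps: writing down the narrow-versus-ordinary class group sequence together with the inclusion $C \hookrightarrow \Cl_2^+(K)$, and applying the Armitage--Frohlich bound in the correct direction; once these are in place the conclusion is immediate, so I do not expect a serious obstacle.
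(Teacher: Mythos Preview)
Your proof is correct and is essentially the same argument as the paper's, stated contrapositively: the paper argues directly that odd class number forces (via Armitage--Fr\"ohlich) $|C|\le 2$ and hence the existence of a unit of mixed signature, while you show that the absence of such a unit forces $|C|=4$, hence $\dim_{\F_2}\Cl_2^+(K)\ge 2$, hence even class number. The key inputs---the exact sequence identifying $C=\{\pm1\}^3/\mathrm{sgn}(\O_K^\times)$ with the kernel of $\Cl^+(K)\to\Cl(K)$, the Armitage--Fr\"ohlich bound, and Corollary~\ref{odd}---are identical.
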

Note again the stark contrast between Corollary~\ref{mixed2} and the
corresponding situation in Corollary~\ref{quadmixed} for quadratic
fields.  
Corollaries~\ref{odd} and \ref{nocontrast}--\ref{mixed2} continue to hold, with the same percentages, even
when one averages over just those cubic fields satisfying any desired set of local conditions at a finite set of primes, or more generally over {\it any} acceptable family of maximal cubic orders (allowing us to consider families of cubic fields satisfying suitable infinite sets of local conditions). 

We prove Theorems~\ref{cgvsncg} and \ref{diff} and their
corollaries as an~application of a composition law on a prehomogeneous
vector space that was investigated in~\cite{Bhargava1}.  
We lay down the relevant preliminaries about this law of composition,
defined on pairs of ternary quadratic forms, in Section~2; more precisely, we give a correspondence between pairs of ternary quadratic forms and 2-torsion elements in class groups of cubic rings over a principal ideal domain. This allows us to describe a composition law on certain integer-matrix pairs of ternary quadratic forms in
Section~3, leading to groups
that are
closely related to both the class groups and (with some additional
effort) the narrow class groups of orders in cubic fields. In Section~4, we focus on the composition law on \emph{reducible} pairs of ternary quadratic forms,
which leads to groups that are closely related to the ideal groups of orders in cubic fields. In
Section~5, we describe how one can count the appropriate 
integer points in this
prehomogeneous vector space using the results of~\cite{density}.
By counting these configurations of points, we then complete
the proofs of Theorems~\ref{cgvsncg} and~\ref{diff} in Section~6.  Finally, we 
deduce Corollaries~\ref{odd} and \ref{nocontrast}--\ref{mixed2} in Section~7.

\section{Parametrization of order 2 ideal classes in cubic orders}

The key ingredient in the proof of Theorem~\ref{cgvsncg} is a
parametrization of ideal classes of order~2 in cubic rings by means of
equivalence classes of pairs of integer-matrix ternary quadratic forms, 
which was 
obtained
in~\cite{Bhargava1}.  We begin by describing this orbit space briefly.

If $T$ is a principal ideal domain, then let $V_T=T^2\otimes\Sym^2 T^3$ denote the space of pairs $(A,B)$ of
ternary quadratic forms over $T$.  We write an element
$(A,B)\in V_T$ as a pair of $3\times 3$ symmetric matrices with coefficients in $T$ as
follows:
\begin{equation}\label{expandAB}
(A,B)=\left(
\left[\begin{array}{ccc}{a_{11}}&a_{12}&a_{13}\\a_{12}&a_{22}&{a_{23}}\\a_{13}
&{a_{23}}&{a_{33}}\end{array}\right],
\left[\begin{array}{ccc}{b_{11}}&b_{12}&b_{13}\\b_{12}&b_{22}&{b_{23}}\\b_{13}
&{b_{23}}&{b_{33}}\end{array}\right]
\right).
\end{equation}
In particular, the elements of $V_\Z$ are called pairs $(A,B)$ of {\it integer-matrix} ternary quadratic forms.

The group $\GL_2(T)\times\GL_3(T)$ acts naturally on the space
$V_T$. Namely, an element $g_2\in \GL_2(T)$ acts by changing the
basis of the rank~2 $T$-module of forms spanned by $(A,B)$; more precisely, if
$g_2=\bigl(\begin{smallmatrix}r&s\\t&u\end{smallmatrix}\bigr)$, then we set 
$g_2\cdot(A,B)=(rA-sB,-tA+uB)$.  Similarly, an element $g_3\in
\GL_3(T)$ changes the basis of the rank 3 $T$-module on which
the forms $A$ and $B$ take values; we have $g_3\cdot(A,B)=(g_3Ag_3^t,
g_3Bg_3^t)$. It is easy to see that these actions of $g_2$ and $g_3$ commute, leading to an action of $\GL_2(T)\times\GL_3(T)$. However, this action is not faithful in general.  If we let 
	$$G_T:=\GL_2(T)\times \GL_3(T)/\left\{\left(\left[\begin{smallmatrix}\lambda^{-2} & 0 \\ 0 & \lambda^{-2}\end{smallmatrix}\right],\left[\begin{smallmatrix}\lambda & 0 & 0 \\ 0 & \lambda & 0\\ 0 & 0 & \lambda\end{smallmatrix}\right]\right) : \lambda \in T^\times\right\},$$
then for all principal ideal domains $T$, the group $G_T$ acts faithfully on $V_T$.

The action of $\SL_2(\C)\times\SL_3(\C)$ on $V_\C$ turns out to have a {unique}
polynomial invariant (see, e.g., Sato--Kimura~\cite{SatoKimura}); i.e.,
the ring of all polynomial functions on $V_\C$ that are invariant
under the action of $\SL_2(\C)\times\SL_3(\C)$ is generated by one element.  This generating element can be defined naturally over $\Z$, or indeed, over any principal ideal domain $T$. 

To construct this fundamental
invariant, we notice first that the action of $\SL_3(\C)$ on $V_\C$ has
four independent polynomial invariants, namely the coefficients
$a,b,c,d$ of the binary cubic form
\begin{equation}\label{fdef}
f(x,y) = f_{(A,B)}(x,y) :=\Det(A x - B y),
\end{equation}
where $(A,B)\in V_\C$.
For an element $(A,B)\in V_T$, we call $f(x,y)$ as defined by (\ref{fdef}) the {\it binary cubic form invariant} of $(A,B)$. An element $\gamma \in \GL_2(T)$ then acts on the binary cubic form invariant $f(x,y)$ by $\gamma\cdot f(x,y) := f((x,y)\gamma)$. 

Now it is well-known
that the action of $\SL_2(\C)$ on the space of complex binary cubic forms has a unique polynomial invariant (up to scaling), namely its
discriminant $\Disc(f)$.  
The unique polynomial invariant for the
action of $\SL_2(\C) \times \SL_3(\C)$ on $V_\C$ is then given by $\Disc(A,B):=\Disc(\Det(A x - B y))$, which is an integer polynomial of degree 12 in the entries of $A$ and $B$.

For $(A,B)\in V_T$, we call this fundamental invariant $\Disc(A,B)$ the {\it discriminant} of $(A,B)$. Note that for any 
$(g_2,g_3) \in \GL_2(T) \times \GL_3(T)$ and $(A,B)\in V_T$, we have
$$\Disc((g_2,g_3)\cdot (A,B)) = \det(g_2)^6 \det(g_3)^8 \, \Disc(A,B),$$ 
and thus $\Disc(A,B)$ is also a \emph{relative invariant} for the action of $G_T$ on~$V_T$.  
We say that $(A,B)\in V_T$ is {\it nondegenerate} if it
has nonzero discriminant.

The orbits of $G_T$ on $V_T$ have an important arithmetic
significance, namely they essentially parametrize order 2 ideal
classes in cubic rings over $T$.  Recall that a {\it cubic ring} over $T$ is any ring with unit
that is free of rank 3 as a $T$-module; for example, an order
in a cubic number field is a cubic ring over $\bZ$.  
In 1964, Delone and Faddeev~\cite{DF} showed that orders in cubic
fields are parametrized by irreducible integral binary cubic forms;
this was recently extended to general cubic rings and general binary
cubic forms by Gan, Gross, and Savin~\cite{GGS} (see also the work of Gross and Lucianovic~\cite{GL}). 

We have observed above that a pair $(A,B)\in V_T$ of ternary quadratic forms yields
a binary cubic form invariant $f$ over $T$, and thus we naturally obtain a cubic ring
$R(f)$ over $T$ from an element $(A,B)\in V_T$ via the Gan--Gross--Savin parametrization described above.
We say that a cubic ring over $T$ is {\it nondegenerate} if it has nonzero 
discriminant. 

We now describe a precise correspondence between nondegenerate pairs of ternary
quadratic forms in $V_T$ and ideal classes ``of order 2'' in the corresponding cubic 
rings over a principal ideal domain $T$.  To simplify the statement, we assume that we have a set $S$ of representatives in~$T\setminus\{0\}$ for the left action of $T^\times$ on $T\setminus\{0\}$.  For example, if $T=\Z$, then we will always take $S$ to be the set of positive elements of $\Z$; similarly, if $T=\Z_p$, then $S$ will always  be chosen to consist of the powers of $p$ in~$\Z_p$; and if $T$ is a field, then we will always take $S=\{1\}$.  For an ideal $I$ of a cubic ring $R$ over~$T$, we use $N_S(I)$ to denote the unique generator in~$S$ of the 
usual ideal norm of $I$ viewed as a $T$-ideal.  (When the choice of $S$ is clear, as in the case of $\Z$, $\Z_p$,  or a field, then we usually drop it from the notation.)
Then we have the following generalization of~\cite[Thm.~4]{Bhargava1}:

\begin{theorem}\label{potqfideal} 
Let $T$ and $S$ be as above.  Then
there is a bijection between the set of nondegenerate $G_T$-orbits on
the space $V_T=T^2\otimes\Sym^2 T^3$
and the set of equivalence classes of triples $(R,I,\delta)$, where
$R$ is a nondegenerate cubic ring over~$T$, 
\,$I$ is an ideal of $R$ having rank $3$ as a $T$-module, and $\delta$ is an invertible element of $R\otimes\Frac(T)$ such that
$I^2\subseteq (\delta)$ and $N_S(I)^2\!=\!N(\delta)$.  $($Here two triples
$(R,I,\delta)$ and $(R',I',\delta')$ are equivalent if there exists an 
isomorphism $\phi:R\rightarrow R'$ and an element 
$\kappa\in R'\otimes\Frac(T)$ such that $I'=\kappa \phi(I)$ and $\delta'=\kappa^2\phi(\delta).)$
\end{theorem}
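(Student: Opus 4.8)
The plan is to adapt the proof of the $T=\Z$ case in \cite[Thm.~4]{Bhargava1} to an arbitrary principal ideal domain $T$, while inserting the $\delta$-bookkeeping needed to accommodate ideals $I$ that are not invertible. Concretely, I would build maps in both directions between nondegenerate $G_T$-orbits on $V_T$ and equivalence classes of triples $(R,I,\delta)$, and then check that the two maps are mutually inverse and that each is compatible with the relevant equivalence relations.

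For the forward map, start with a nondegenerate $(A,B)\in V_T$. Form the binary cubic $f=f_{(A,B)}(x,y)=\Det(Ax-By)$; since $\Disc(A,B)=\Disc(f)\neq 0$, this $f$ is nondegenerate, so by the Delone--Faddeev/Gan--Gross--Savin parametrization \cite{DF,GGS} it yields a nondegenerate cubic ring $R=R(f)$ over $T$ with a standard basis $1,\omega,\theta$ whose multiplication table is read off from the coefficients of $f$. Regarding $(A,B)$ in the standard basis as a symmetric $T$-bilinear pairing $M\times M\to T^2$ on the free rank-$3$ module $M=T^3$, with values in $T^2\cong R/(T\cdot 1)$, one prescribes the action of $\omega$ and $\theta$ on $M\otimes\Frac(T)$ by explicit matrices formed from $A$ and $B$ (over $\Frac(T)$ the relevant generator acts essentially as $A^{-1}B$, so the denominators are controlled by $\Det(A)$). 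Verifying that these matrices satisfy the multiplication table of $R(f)$ is equivalent to the identity $\Det(Ax-By)=f$ together with Cayley--Hamilton for the pencil $\{Ax-By\}$; this is the lengthy but essentially mechanical computation carried out over $\Z$ in \cite{Bhargava1}. One then identifies $M$, with this action, as a fractional $R$-ideal $I$, and sets $\delta$ to be $\Det(A)^{-1}$ up to the unit of $T$ imposed by $N_S(I)^2=N(\delta)$; the containment $I^2\subseteq(\delta)$ is checked directly, and the original pairing $(A,B)$ is recovered as $(x,y)\mapsto xy/\delta \bmod (T\cdot 1)$.

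For the reverse map, given $(R,I,\delta)$, use the $\GL_2(T)$-freedom to fix a Delone--Faddeev-normalized $T$-basis $1,\omega,\theta$ of $R$ and any $T$-basis of $I$. Since $I^2\subseteq(\delta)$, the rule $(x,y)\mapsto xy/\delta\bmod (T\cdot 1)$ is a well-defined symmetric $T$-bilinear pairing $I\times I\to R/(T\cdot 1)\cong T^2$; its $\omega$- and $\theta$-components form a pair $(A,B)$ of symmetric $3\times 3$ matrices over $T$, i.e.\ an element of $V_T$. Using $N_S(I)^2=N(\delta)$, one checks that $f_{(A,B)}=\Det(Ax-By)$ coincides with the Delone--Faddeev binary cubic of $R$, so $R(f_{(A,B)})\cong R$, and then that this construction inverts the forward one. (When $I$ is invertible, $I^2\subseteq(\delta)$ and $N_S(I)^2=N(\delta)$ together force $I^2=(\delta)$, since $\delta^{-1}I^2$ is then an invertible $R$-ideal of norm $1$; the weaker hypothesis $I^2\subseteq(\delta)$ is precisely what lets non-invertible $I$---and hence the $2$-torsion data relevant to the ideal group $\I(R)$---enter the correspondence.)

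Finally I would check equivariance and the equivalences. Changing the $T$-basis of $I$ acts on $(A,B)$ through $\GL_3(T)$; changing the $T$-basis of $R/(T\cdot 1)$ acts through $\GL_2(T)$, matching the action $(\gamma\cdot f)(x,y)=f((x,y)\gamma)$ on binary cubics; replacing $(R,I,\delta)$ by an isomorphic copy and $(I,\delta)$ by $(\kappa I,\kappa^2\delta)$ for an invertible $\kappa\in R\otimes\Frac(T)$ moves the output within a single $\GL_2(T)\times\GL_3(T)$-orbit; and the scalar subgroup $\{(\lambda^{-2}\Id,\lambda\,\Id):\lambda\in T^\times\}$ that is quotiented out in $G_T$ is exactly the overlap between rescaling the basis of $I$ and rescaling $\delta$, so that the resulting bijection is precisely between $G_T$-orbits and equivalence classes of triples. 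Nondegeneracy is preserved in both directions by the relative-invariant identity $\Disc((g_2,g_3)\cdot(A,B))=\det(g_2)^6\det(g_3)^8\Disc(A,B)$. I expect the main obstacle---beyond the routine polynomial verifications inherited from the $T=\Z$ case---to be the denominator-and-content analysis over a general PID: one must show that the module constructed over $\Frac(T)$ really is $T$-stable under the $R$-action, and that a $\delta\in R\otimes\Frac(T)$ meeting both $I^2\subseteq(\delta)$ and $N_S(I)^2=N(\delta)$ always exists and is unique up to the allowed equivalence. This is where one genuinely uses that $T$ is a PID---so that content ideals are principal and the section $S$ of $T^\times\backslash(T\setminus\{0\})$ fixes norms unambiguously---in place of the ad hoc ``clear denominators over $\Z$'' manipulations of \cite{Bhargava1}.
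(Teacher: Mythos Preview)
Your overall plan matches the paper's: both adapt \cite[Thm.~4]{Bhargava1} to a general PID by building explicit maps in each direction and checking $G_T$-equivariance, and your reverse map via the pairing $\alpha\otimes\beta\mapsto(\alpha\beta/\delta)\bmod T$ is exactly the paper's construction. There is, however, a concrete error in your forward map. Your prescription ``set $\delta$ to be $\Det(A)^{-1}$ up to the unit of $T$ imposed by $N_S(I)^2=N(\delta)$'' would make $\delta$ a scalar in $\Frac(T)$, but $\delta$ lives in $R\otimes\Frac(T)$ and is in general \emph{not} a scalar: its squareclass in $(R\otimes\Frac(T))^\times$ modulo squares is an invariant of the $G_T$-orbit (over $\Z$, for instance, Lemma~\ref{lemma1} separates the orbits $V^{(0)}$ and $V^{(2)}$ precisely by the real signature of $\delta$, and Lemma~\ref{lemma2} detects reducibility by whether $\delta$ is a square in $(R\otimes\Q)^\times$). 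Distinct non-scalar $\delta$'s with the same $(R,I)$ therefore give inequivalent triples, so your forward map as stated cannot hit them.

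The paper recovers $\delta$ differently. From associativity of the triple products $(\delta^{-1}\alpha_i)(\delta^{-1}\alpha_j)(\delta^{-1}\alpha_k)$ one extracts explicit polynomial formulas for auxiliary constants $c_{ij}$ in terms of the entries of $A$ and $B$; these in turn determine the ratios $\alpha_1:\alpha_2:\alpha_3$ inside $R\otimes\Frac(T)$, pinning down $I$ up to a single scalar factor; once the $\alpha_i$ are fixed, $\delta$ is then uniquely read off from the system $\alpha_i\alpha_j=\delta(c_{ij}+b_{ij}\omega+a_{ij}\theta)$. A secondary point: your description of the $R$-action on $M$ via $A^{-1}B$ presumes $\Det(A)\neq0$, but nondegeneracy of $(A,B)$ only gives $\Disc(f)\neq0$, not that the leading coefficient $a=\Det(A)$ of $f$ is nonzero. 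The paper's polynomial formulas for the $c_{ij}$ avoid inverting anything; if you prefer your route, you would need to first apply a $\GL_2(T)$-transformation to arrange $\Det(A)\in T\setminus\{0\}$.
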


The proof of Theorem~\ref{potqfideal} is identical to that given in \cite{Bhargava1} (with a suitable change to the definition of orientation, described below).  We briefly sketch the bijective map for general $T$. 

Given a triple $(R,I,\delta)$ over $T$, we construct the corresponding pair of ternary quadratic forms over $T$ as follows. First, we may write $R = T + T\omega + T\theta$ where $\langle 1, \omega, \theta \rangle$ is a \emph{normal} basis, i.e., $\omega \theta \in T$. Furthermore, we can write $I = T\alpha_1 + T\alpha_2 + T\alpha_3$, where $\langle \alpha_1 ,\alpha_2,\alpha_3\rangle$ has the \emph{same orientation} as $\langle 1, \omega, \theta\rangle$, i.e., the change-of-basis matrix from $\langle 1, \omega, \theta \rangle$ to $\langle \alpha_1 ,\alpha_2,\alpha_3\rangle$ has determinant lying in our fixed choice $S$ of representatives in $T\setminus\{0\}$ for the left action of~$T^\times$ on~$T\setminus\{0\}$.
Since~$I^2 \subseteq \delta R$, we have for all $i,j \in \{1,2,3\}$ that
	\begin{equation}\label{above}
	\alpha_i\alpha_j = \delta(c_{ij} + b_{ij}\omega + a_{ij}\theta)
	\end{equation}
where $a_{ij}, b_{ij}, c_{ij} \in T$. Then $(A,B)=\left((a_{ij}),(b_{ij})\right)$ yields the desired pair of symmetric $3 \times 3$ matrices over $T$.
In basis-free terms, $(A,B)$ corresponds to the symmetric bilinear map of $T$-modules
	\begin{equation}\label{cf}
	\varphi:I \otimes I \to R/T \qquad \alpha \otimes \beta \mapsto \left(\frac{\alpha \beta}{\delta}\right) \bmod T.
	\end{equation}

On the other hand, if $(A,B) \in V_T$ is a pair of ternary quadratic forms over $T$, 
and $f_{(A,B)}(x,y) = \Det(Ax - By) = ax^3 + bx^2y + cxy^2 + dy^3$ is the associated binary cubic form with coefficients $a,b,c,d \in T$, then $R = R(f_{(A,B)})$ is the cubic ring with basis $\langle 1, \omega, \theta \rangle$ satisfying
	\begin{equation}\label{cubicbasis}
	\begin{array}{rcl}
	\omega \theta &=& -ad \\
	\omega^2 &=& -ac - b\omega + a \theta \\
	\theta^2 &=& -bd - d\omega + c\theta.
	\end{array}
	\end{equation} 
In order to describe $I$ in terms of a basis $\langle \alpha_1, \alpha_2, \alpha_3 \rangle$, it suffices to show that all $c_{ij}$ and $\delta$ in (\ref{above}) are uniquely determined by $(A,B)$. First, 
one shows using Equation (\ref{above}), together with the associative law on triple products $(\delta^{-1} \alpha_i)(\delta^{-1} \alpha_j)(\delta^{-1} \alpha_k)$, that the $c_{ij}$ can be written explicitly as 
	$$c_{ij} = \sum_{{i' < i''}\atop{j' < j''}} 
				\left(\begin{smallmatrix}i & i' & i'' \\1 & 2 & 3\end{smallmatrix}\right) \cdot
				\left(\begin{smallmatrix} j & j' & j'' \\1 & 2 & 3\end{smallmatrix}\right) \cdot	\left|\begin{array}{cc}a_{ij} & a_{ij'} \\a_{i'j} & a_{i'j'}\end{array}\right|
		\cdot 	\left|\begin{array}{cc}b_{ij} & b_{ij''} \\b_{i''j} & b_{i''j''}\end{array}\right|,$$
where $\left(\begin{smallmatrix} \cdot & \cdot & \cdot \\ 1 & 2 & 3 \end{smallmatrix}\right)$ denotes the sign of the permutation of $(123)$ described by the top row. The system of equations given in (\ref{above}) shows that for each $j \in \{1,2,3\}$, we have
	$$\alpha_1 : \alpha_2 : \alpha_3 = c_{1j} + b_{1j}\omega + a_{1j} \theta:c_{2j} + b_{2j}\omega + a_{2j} \theta:c_{3j} + b_{3j}\omega + a_{3j} \theta.$$
The ratios are independent of the choice of $j$, and so this determines $\langle \alpha_1,\alpha_2,\alpha_3\rangle$ uniquely up to a scalar factor in $R \otimes \Frac(T)$. Once we fix a choice of $\langle \alpha_1, \alpha_2, \alpha_3 \rangle$, Equation (\ref{above}) then uniquely determines $\delta\in R \otimes \Frac(T)$. The $R$-ideal structure of the $T$-module $I$ is given by \cite[Eqn.~(11)]{Bhargava1}. This completely and explicitly determines the triple $(R,I,\delta)$ from the pair $(A,B)$ of ternary quadratic forms. 

Finally, we describe the action of $\GL_2(T) \times \GL_3(T)$ on the bases of $R/T$ and $I$ corresponding to the action of the same group on pairs $(A,B)$ of ternary quadratic forms over $T$ described previously. If $g_2 \in \GL_2(T)$ is the matrix $g_2 = \left(\begin{smallmatrix} r & s \\ t & u \end{smallmatrix}\right)$, then it acts on the normal basis of $R/T = \langle \omega, \theta \rangle$  and the $T$-basis $\langle \alpha_1, \alpha_2,\alpha_3\rangle$ of the ideal $I$ by
\begin{eqnarray}\label{g2action}
\langle \omega,\theta \rangle &\mapsto& \det(g_2) \cdot \langle \omega, \theta \rangle \cdot g_2^t= \det(g_2)\cdot\langle r \omega + s \theta, t\omega + u\theta\rangle \\ \nonumber
\langle \alpha_1, \alpha_2,\alpha_3 \rangle &\mapsto& \det(g_2)\cdot\langle \alpha_1, \alpha_2, \alpha_3\rangle. 
\end{eqnarray}
On the other hand, $g_3 \in \GL_3(T)$ acts on the bases $\langle \omega, \theta \rangle$ and $\langle \alpha_1, \alpha_2,\alpha_3\rangle$ by 
\begin{eqnarray}\label{g3action}
\langle \omega,\theta \rangle &\mapsto& \det(g_3)^2\cdot\langle \omega,\theta\rangle \\ \nonumber
\langle \alpha_1, \alpha_2,\alpha_3 \rangle &\mapsto& \langle \alpha_1, \alpha_2, \alpha_3\rangle \cdot g_3^t.
\end{eqnarray}
One now
 easily 
checks that the equivalence defined on triples $(R,I,\delta)$ in the statement of the theorem exactly corresponds to $G_T$-equivalence on pairs $(A,B)\in V_T$ of ternary quadratic forms over~$T$.

\begin{remark} {\em 
Note that $G_\bZ \cong \GL_2(\bZ)\times \SL_3(\bZ)$, and so we recover \cite[Thm.~4]{Bhargava1}. 
}\end{remark}
The following corollary of 
Theorem~\ref{potqfideal} 
describes the 
stabilizer in $G_T$ of an element $(A,B) \in V_T$:

\begin{corollary}\label{stabcor}
The stabilizer in $G_T$ of a
nondegenerate element $(A,B)\in V_T$ is given by the 
semidirect product
\[\Aut(R;I,\delta)\ltimes U_2^+(R_0),\] 
where: $(R,I,\delta)$ is the triple corresponding
to $(A,B)$ as in Theorem~$\ref{potqfideal}$; $\Aut(R;I,\delta)$ is the
group $\{\phi\in\Aut(R) : \exists\kappa\in R\otimes\Frac(T) \mbox{ s.t. }
\phi(I)=\kappa I \mbox{ and } \phi(\delta)=\kappa^2\delta\}$;
$R_0=\End_R(I)$ is the endomorphism ring of $I$; and
$U_2^+(R_0)$ denotes the group of units of $R_0$ having order
dividing~$2$ and norm~$1$.
\end{corollary}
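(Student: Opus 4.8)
The plan is to deduce the corollary directly from the bijection of Theorem~\ref{potqfideal}, by tracking what the stabilizer of $(A,B)$ must do to the associated triple $(R,I,\delta)$. First I would observe that, since the bijection is $G_T$-equivariant in the sense described at the end of the sketch, an element $g = (g_2,g_3) \in G_T$ fixes $(A,B)$ if and only if it carries the triple $(R,I,\delta)$ to an equivalent triple realized by the \emph{same} basis data; more precisely, $g$ induces some automorphism $\phi$ of $R$ together with a scalar $\kappa \in (R\otimes\Frac(T))^\times$ with $\phi(I)=\kappa I$ and $\phi(\delta)=\kappa^2\delta$, and conversely every such pair $(\phi,\kappa)$ gives rise to a stabilizing $g$. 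So the first order of business is to make this "triple automorphism group" precise: it is exactly $\Aut(R;I,\delta)$ as defined in the statement, but one must remember that a given $\phi$ may be compatible with more than one $\kappa$.

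The key point, then, is to analyze the ambiguity in $\kappa$. If $\phi \in \Aut(R;I,\delta)$ is realized by both $\kappa$ and $\kappa'$, then $\mu := \kappa'/\kappa \in (R\otimes\Frac(T))^\times$ satisfies $\mu I = I$ and $\mu^2 \delta = \delta$, i.e.\ $\mu^2 = 1$ and $\mu I = I$. The condition $\mu I = I$ says precisely that $\mu$ lies in the group of units of $R_0 = \End_R(I)$ (here one uses that $I$ has rank $3$, so $\End_R(I)$ is an order in the same cubic algebra and $\{\mu \in (R\otimes\Frac(T))^\times : \mu I = I\} = R_0^\times$); together with $\mu^2=1$ this puts $\mu \in U_2(R_0)$. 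One also needs the norm-one condition: comparing $T$-module indices, $\mu I = I$ forces $N(\mu) = \pm 1$, and I would check that the orientation constraint built into Theorem~\ref{potqfideal} (the change-of-basis determinant lies in $S$, e.g.\ is positive when $T=\bZ$) pins this down to $N(\mu) = 1$ — this is where the "$U_2^+$" decoration, rather than "$U_2$", comes from. Conversely any $\mu \in U_2^+(R_0)$ multiplies $(R,I,\delta)$ by an allowed scalar fixing the triple, hence gives a stabilizing element of $G_T$ that projects to the identity automorphism of $R$; I would identify this copy of $U_2^+(R_0)$ inside $G_T$ explicitly using the $g_3$-action formula~(\ref{g3action}), $\langle\alpha_1,\alpha_2,\alpha_3\rangle \mapsto \langle\alpha_1,\alpha_2,\alpha_3\rangle\cdot g_3^t$, so that $\mu$ corresponds to the $\GL_3(T)$-matrix expressing multiplication-by-$\mu$ on the chosen basis of $I$.

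Assembling these pieces gives a short exact sequence $1 \to U_2^+(R_0) \to \Stab_{G_T}(A,B) \to \Aut(R;I,\delta) \to 1$, and to promote it to the claimed semidirect product I would exhibit a splitting: a genuine automorphism $\phi$ of $R$ that preserves the basis $\langle 1,\omega,\theta\rangle$ up to the $g_2$-twist and sends $\langle\alpha_1,\alpha_2,\alpha_3\rangle$ to $\langle\phi(\alpha_1),\phi(\alpha_2),\phi(\alpha_3)\rangle$ determines a canonical lift to $G_T$ (choosing $g_2$ from the action of $\phi$ on $R/T$ via~(\ref{g2action}) and $g_3$ from the induced map on a basis of $I$), and these lifts are closed under composition, so $\Aut(R;I,\delta)$ sits inside $\Stab_{G_T}(A,B)$ as a subgroup complementary to $U_2^+(R_0)$. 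Normality of $U_2^+(R_0)$ is automatic since it is the kernel of the map to $\Aut(R)$.

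I expect the main obstacle to be the bookkeeping around orientations and norms — specifically, verifying cleanly that the scalar ambiguity $\mu$ is forced to have norm exactly $1$ (not merely $\pm 1$) and that this is compatible across the two group factors, and checking that the proposed section is well-defined independent of the auxiliary choices of normal basis and oriented ideal basis. None of this is deep, but getting the $U_2^+$ versus $U_2$ distinction right, and confirming that it matches the $\det(g_3)$-normalization implicit in $G_T$ (where we have quotiented only by the scalars $\lambda \in T^\times$ acting as $(\lambda^{-2}I_2,\lambda I_3)$), is the delicate part; everything else follows formally from the equivariant bijection of Theorem~\ref{potqfideal}.
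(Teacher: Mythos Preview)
Your approach is essentially the same as the paper's: both extract the stabilizer from the equivariant bijection of Theorem~\ref{potqfideal} by projecting a stabilizing $g$ to the induced $\phi\in\Aut(R;I,\delta)$ and identifying the fiber over $\phi$ (equivalently, the ambiguity in $\kappa$) with $U_2^+(R_0)$. One caveat on your proposed splitting: the section $\phi\mapsto\bigl(\alpha_i\mapsto\phi(\alpha_i)\bigr)$ lands in $\phi(I)=\kappa I$ rather than $I$, so to produce an honest $\GL_3(T)$-element you must divide by a choice of $\kappa$, which reintroduces exactly the $U_2^+(R_0)$-ambiguity you are trying to section past; the paper's own proof in fact only establishes the short exact sequence and does not verify the semidirect-product splitting explicitly (and downstream only $|\Stab|$ is ever used).
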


\begin{proof}
First, note that the elements of $\GL_2(T) \times \GL_3(T)$ that preserve the bases of $R/T$ and $I$ under the action described by (\ref{g2action}) and (\ref{g3action}) are exactly the elements of 
	$$C(T):=\left\{\left(\left[\begin{smallmatrix}\lambda^{-2} & 0 \\ 0 & \lambda^{-2}\end{smallmatrix}\right],\left[\begin{smallmatrix}\lambda & 0 & 0 \\ 0 & \lambda & 0\\ 0 & 0 & \lambda\end{smallmatrix}\right]\right) : \lambda \in T^\times\right\}.$$ 
That is, $C(T)$ acts pointwise trivially on $V_T$. 
This implies that if $g \in \GL_2(T) \times \GL_3(T)$, 
then any element of the coset $g\cdot C(T)$ acts on the bases of $R/T$ and $I$ in the same way that $g$ acts. Furthermore, if the actions of $g, g' \in \GL_2(T) \times \GL_3(T)$ are distinct on the bases of $R/T$ and $I$,  then $g' \notin g\cdot C(T)$. 
Thus, it suffices to compute the number of distinct actions on the bases of $R/T$ and $I$ by elements $g\in \GL_2(T) \times \GL_3(T)$ that preserve $A=(a_{ij})$ and $B=(b_{ij})$ in (\ref{above}), in order to compute the number of cosets $g \cdot C(T)$ in the stabilizer in $\GL_2(T) \times \GL_3(T)$ of an element~$(A,B)\in V_T$. 

Now suppose $g = (g_2,g_3) \in \GL_2(T) \times \GL_3(T)$ preserves a pair $(A,B)$.  In terms of the corresponding triple $(R,I,\delta)$, $g$ acts on the basis of $R/T$ as described in        
Equations (\ref{g2action}) and (\ref{g3action}), which induces an automorphism $\phi$ of $R$.
As $g$ must preserve the triple $(R,I,\delta)$ up to equivalence, we see that
$\phi \in \Aut(R;I,\delta)$, i.e., there exists $\kappa\in R\otimes\Frac(T)$ such that 
$\phi(I)=\kappa I$ { and } $\phi(\delta)=\kappa^2\delta$. 
To preserve the structure coefficients $b_{ij}$ and $a_{ij}$ in (\ref{above}), the 
change of basis of $R/T$ corresponding to the element $\phi\in\Aut(R;I,\delta)$ then uniquely determines
the change of basis for $I$ (namely, $\alpha\mapsto\kappa^{-1}\phi(\alpha)$ for $\alpha\in I$, assuming we keep $\delta$ the same in (\ref{above})), up to transformations on the basis of $I$ that act via multiplication by an element of $U_2^+(\End_R(I)$). This is the desired conclusion.
\end{proof} \vspace{-.1in}

\section{Composition of pairs of integer-matrix forms
 and (narrow) 2-class groups}

Let us now restrict ourselves to those cubic rings $R$ over the base ring $\bZ$ that are orders
in $S_3$-cubic fields.  It is known that, when ordered by absolute
discriminant, such $S_3$-cubic orders constitute a proportion of
$100\%$ of all orders in cubic fields (since orders in abelian cubic
fields are negligible~in number, \;\!\!in comparison).  \!Note that the automorphism group $\Aut(R)$ of an $S_3$-cubic order $R$ is~trivial.

Let us say that a triple $(R,I,\delta)$ is {\it projective} if $I$ is projective as an
$R$-module (i.e., if $I$ is invertible as a fractional ideal of $R$); in such a 
case we have $I^2=(\delta)$.  For a fixed $S_3$-cubic order~$R$, 
we therefore obtain a natural law of composition on 
equivalence classes of projective triples $(R,I,\delta)$, defined by
\[
        (R,I,\delta)\circ(R,I',\delta') = (R,II',\delta\delta').
\]
The equivalence classes of projective triples $(R,I,\delta)$ then form a
group under this composition law, which we denote by $H(R)$.  

Let us say that a pair $(A,B)\in V_\Z$ is {\it projective} if the corresponding $(R,I,\delta)$, under the
bijection of Theorem~\ref{potqfideal}, is projective.  We then also obtain a 
composition law on projective equivalence classes of pairs $(A,B)$ of ternary
quadratic forms having binary cubic form invariant equal to a given $f$, where
$R(f)\cong R$ (a higher degree analogue of Gauss composition).  
We denote the corresponding group on such equivalence classes of $(A,B)$ also 
by $H(R)$. 

Fix an $S_3$-cubic order $R$. Let $U$ denote the unit group of $R$, and let
$U^{\mbox{\footnotesize pos norm}}$ denote the subgroup of those units having
positive norm.  Then we have an exact sequence
\begin{equation}\label{hr}
1 \to \frac{U^{\mbox{\footnotesize pos norm}}}{U^{2}} \to H(R) \to \Cl_2(R) \to 1,
\end{equation}
where $U^2$ denotes the subgroup of $U$ consisting of square units. 

If $R$ is an order in a totally real cubic field, then the group
${U^{\mbox{\footnotesize pos norm}}}/{U^2}$ has order $4$. Meanwhile,
if $R$ is an order in a complex cubic field, then the latter group has
order 2.  We thus obtain:

\begin{lemma}\label{hr2}
We have $|H(R)|=4 \cdot|\Cl_2(R)|$ when $R$ is an order in a totally
real $S_3$-cubic field, and 
$|H(R)|=2 \cdot|\Cl_2(R)|$ when $R$ is an order in a complex cubic
field. 
\end{lemma}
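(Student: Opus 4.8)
The plan is to read off the lemma from the exact sequence~(\ref{hr}). That sequence exhibits $H(R)$ as a finite extension of $\Cl_2(R)$ by the group $U^{\mbox{\footnotesize pos norm}}/U^2$, so $|H(R)|=|U^{\mbox{\footnotesize pos norm}}/U^2|\cdot|\Cl_2(R)|$; hence the whole content of the lemma is the computation, recorded in the text just above, that $|U^{\mbox{\footnotesize pos norm}}/U^2|$ equals $4$ when $R$ is an order in a totally real cubic field and $2$ when $R$ is an order in a complex cubic field. So the proof consists of that index computation together with one invocation of~(\ref{hr}).

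First I would record the structure of the unit group $U=R^\times$. By Dirichlet's unit theorem for orders, $U\cong\mu(R)\times\Z^{r_1+r_2-1}$, where $\mu(R)$ is the group of roots of unity in $R$; since any $\zeta_n$ lying in a cubic field $K$ would give $\varphi(n)=[\bQ(\zeta_n):\bQ]$ dividing $[K:\bQ]=3$ and hence $n\le2$, we have $\mu(R)=\{\pm1\}$. Thus $U/U^2\cong(\Z/2\Z)^{r_1+r_2}$, which has order $8$ in the totally real case ($r_1=3$, $r_2=0$) and order $4$ in the complex case ($r_1=1$, $r_2=1$). Next I would introduce the homomorphism $\varepsilon: U\to\{\pm1\}$ sending a unit to the sign of its norm: its kernel is precisely $U^{\mbox{\footnotesize pos norm}}$, and $U^2\subseteq\ker\varepsilon$ since squares have positive norm, while $\varepsilon$ is surjective because $-1\in U$ has norm $(-1)^3=-1$. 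Therefore $[U:U^{\mbox{\footnotesize pos norm}}]=2$, and feeding this into the chain $U^2\subseteq U^{\mbox{\footnotesize pos norm}}\subseteq U$ gives $|U^{\mbox{\footnotesize pos norm}}/U^2|=|U/U^2|/2$, namely $4$ in the totally real case and $2$ in the complex case. Substituting into~(\ref{hr}) yields the two stated identities.

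I do not anticipate a real obstacle: given~(\ref{hr}), the argument is a short bookkeeping exercise with Dirichlet's unit theorem. The one step I would flag is the surjectivity of $\varepsilon$ --- equivalently, the fact that every cubic order contains a unit of negative norm --- which can fail in even degree but holds here exactly because $3$ is odd, so that $N(-1)=-1$. The genuinely substantive input, namely the exact sequence~(\ref{hr}) relating $H(R)$ to $\Cl_2(R)$, is established in the surrounding discussion and is used here as a black box.
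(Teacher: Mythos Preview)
Your proposal is correct and follows essentially the same approach as the paper: the paper simply asserts (in the paragraph immediately preceding the lemma) that $|U^{\mbox{\footnotesize pos norm}}/U^{2}|$ equals $4$ in the totally real case and $2$ in the complex case, and then reads off the lemma from the exact sequence~(\ref{hr}). You supply the details of that index computation via Dirichlet's unit theorem and the surjectivity of the norm-sign map, which the paper leaves implicit.
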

Equation (\ref{hr}) and Lemma~\ref{hr2} thus make precise the
relationship between $H(R)$ and $\Cl_2(R)$.

If we further assume that $R$ is an order in a totally real $S_3$-cubic field, 
we can restrict to the group
$H^+(R)\subset H(R)$ consisting of those triples $(R,I,\delta)$ in
which $\delta$ is totally positive\footnote{An element $\delta \in R\otimes \bQ$ is \emph{totally positive} if for every embedding $\sigma : R\otimes \bQ \to \R$, $\sigma(\delta)$ is positive.}.  The group $H^+(R)$ turns out to be
closely related to the group $\Cl_2^+(R)$; in particular, we find:

\begin{lemma}\label{hrp}
  Let $R$ be an order in a totally real $S_3$-cubic field, 
and let $H^+(R)$ be the
  subgroup of $H(R)=\{(R,I,\delta)\}$ where $\delta$ is totally
  positive.  Then $|H^+(R)| = |\Cl_2^+(R)|$.
\end{lemma}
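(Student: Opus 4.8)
The plan is to mimic the structure of the exact sequence \eqref{hr} but for the totally positive subgroups, producing an analogous exact sequence
\begin{equation*}
1 \to \frac{U^{\mbox{\footnotesize tot pos}}}{U^2} \to H^+(R) \to \Cl_2^+(R) \to 1,
\end{equation*}
where $U^{\mbox{\footnotesize tot pos}}$ denotes the group of totally positive units of $R$, and then to observe that for an $S_3$-cubic order both outer quotients have the expected sizes. The middle map should send a projective triple $(R,I,\delta)$ with $\delta$ totally positive to the narrow ideal class of $I$; since $I^2 = (\delta)$ and $\delta$ is totally positive, this is a genuine $2$-torsion element of $\Cl_2^+(R)$. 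The key nontrivial point is to analyze the kernel: a triple $(R,I,\delta)$ with $\delta$ totally positive maps to the trivial narrow class exactly when $I = (\mu)$ for some totally positive $\mu \in (R\otimes\Q)^\times$, and then $(R,I,\delta)$ is equivalent to $(R,R,\delta/\mu^2)$ with $\delta/\mu^2$ a totally positive unit; conversely every totally positive unit $\epsilon$ gives the triple $(R,R,\epsilon)$, and two such are equivalent iff they differ by the square of a unit $\kappa$ with $\kappa^2$ relating the two $\delta$'s — but since the classes are trivial, $\kappa$ itself can be taken to be a unit, and moreover one checks $\kappa$ may be chosen totally positive (or that only $\kappa^2$, automatically totally positive, matters). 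Hence the kernel is $U^{\mbox{\footnotesize tot pos}}/U^2$.

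Next I would compute $|U^{\mbox{\footnotesize tot pos}}/U^2|$ for an order $R$ in a totally real $S_3$-cubic field. Here $U \cong \{\pm 1\}\times\Z^2$ by Dirichlet's unit theorem, and the signature map $\mathrm{sgn}: U \to \{\pm 1\}^3$ has image landing in the norm-one locus; for a totally real $S_3$-cubic field the image of $\mathrm{sgn}$ is all of $\{\pm 1\}^3$ — this is where the $S_3$ hypothesis enters, guaranteeing enough units of mixed signature (equivalently, that there is no "signature defect"), which can be deduced from the fact that the Galois closure has Galois group $S_3$ acting transitively enough on the real places, or cited from the genus-theory/Armitage–Fröhlich circle of ideas. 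It follows that $U^{\mbox{\footnotesize tot pos}}$ has index $8$ in $U$ while $U^2$ has index $8$ in $U$ as well (since $[U : U^2] = 2^3 = 8$ for $U \cong \{\pm1\}\times\Z^2$), so $U^{\mbox{\footnotesize tot pos}} = U^2$? No — one must be careful: $U^2 \subseteq U^{\mbox{\footnotesize tot pos}}$ always, and both have index $8$, so indeed $U^{\mbox{\footnotesize tot pos}} = U^2$, giving a \emph{trivial} kernel. Therefore $H^+(R) \xrightarrow{\sim} \Cl_2^+(R)$ and $|H^+(R)| = |\Cl_2^+(R)|$.

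The main obstacle I anticipate is making the kernel analysis fully rigorous — specifically, verifying the equivalence relation on triples $(R,R,\epsilon)$ descends correctly to cosets modulo $U^2$ rather than modulo some larger or smaller group, and handling the interplay between the "totally positive $\delta$" condition defining $H^+(R)$ and the scaling freedom $\kappa \mapsto \kappa^2$ in the equivalence (in particular that $\kappa$ need not be totally positive, only $\kappa^2$, which is harmless). A secondary subtlety is confirming that the surjection $H^+(R) \to \Cl_2^+(R)$ is genuinely onto: given a narrow ideal class of order $2$, represented by an ideal $I$ with $I^2 = (\delta)$ for \emph{some} generator, one must be able to choose $\delta$ totally positive — this uses that $I^2$ is principal and that the ambiguity in the generator is a unit, combined with surjectivity of $\mathrm{sgn}$ on $U$ to adjust the sign of $\delta$ at each real place; this is exactly the same input as in the kernel computation, so the $S_3$ hypothesis is doing double duty. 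Once those points are pinned down, the proof is a short diagram chase plus the unit-group count.
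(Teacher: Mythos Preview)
There is a genuine gap, and it lies precisely where you suspect the $S_3$ hypothesis is ``doing work''. Your claim that for a totally real $S_3$-cubic order the signature map $\mathrm{sgn}:U\to\{\pm1\}^3$ is surjective is \emph{false}. Indeed, one of the main consequences of this very paper (Corollary~\ref{mixed1}) is that at least $50\%$ of totally real cubic fields do \emph{not} possess units of every signature, so for those fields $\mathrm{sgn}$ is not onto and $U^{\gg0}\supsetneq U^2$. The $S_3$ hypothesis only ensures $\Aut(R)$ is trivial; it says nothing about the signature of units, and there is no Armitage--Fr\"ohlich or genus-theory statement forcing surjectivity here. So your computation ``$[U:U^{\gg0}]=8=[U:U^2]$, hence $U^{\gg0}=U^2$'' breaks down in general, and with it the conclusion that the kernel is trivial.

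There is a second, related problem: the map $H^+(R)\to\Cl_2^+(R)$ sending $(R,I,\delta)\mapsto [I]_+$ is not well-defined. The equivalence on triples allows $I\mapsto\kappa I$ for \emph{any} $\kappa\in K^\times$; the condition $\delta'=\kappa^2\delta$ with $\delta,\delta'\gg0$ imposes no constraint on the signature of $\kappa$ (since $\kappa^2$ is automatically totally positive). Thus the narrow class $[I]_+$ can change under equivalence --- by exactly an element of $\{\pm1\}^3/\mathrm{sgn}(U)\hookrightarrow\Cl_2^+(R)$. You flagged this interplay as ``harmless'', but it is the crux of the matter.

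The paper's proof navigates both issues simultaneously by introducing an intermediate group $G=\{[I]\in\Cl(R):\exists\,\delta\gg0,\ I^2=(\delta)\}$ (ordinary ideal classes, so the map $H^+(R)\to G$ \emph{is} well-defined) and assembling a commutative triangle of exact sequences
\[
1\to U^{\gg0}/U^2\to H^+(R)\to G\to 1,\qquad
1\to \{\pm1\}^3/\mathrm{sgn}(U)\to \Cl_2^+(R)\to G\to 1.
\]
The point is then purely numerical: since $[U:U^2]=8=|\{\pm1\}^3|$ and $[U:U^{\gg0}]=|\mathrm{sgn}(U)|$, one has $|U^{\gg0}/U^2|=|\{\pm1\}^3/\mathrm{sgn}(U)|$ \emph{regardless} of whether $\mathrm{sgn}$ is surjective, and hence $|H^+(R)|=|\Cl_2^+(R)|$. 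Note this gives only equality of cardinalities, not an isomorphism --- which is all the lemma asserts.
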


\begin{proof}
  Let $U$ again denote the unit group of $R$, and let $U^{\gg0}\subset
  U$ denote the subgroup of totally positive units. Let
  $\mbox{sgn}:U\to\{\pm1\}^3$ denote the signature homomorphism that
  takes a unit to its sign (one sign for each of the three real
  embeddings $R\rightarrow\R$). Then we have the following commutative
  triangle of exact sequences:
\begin{center}
\includegraphics{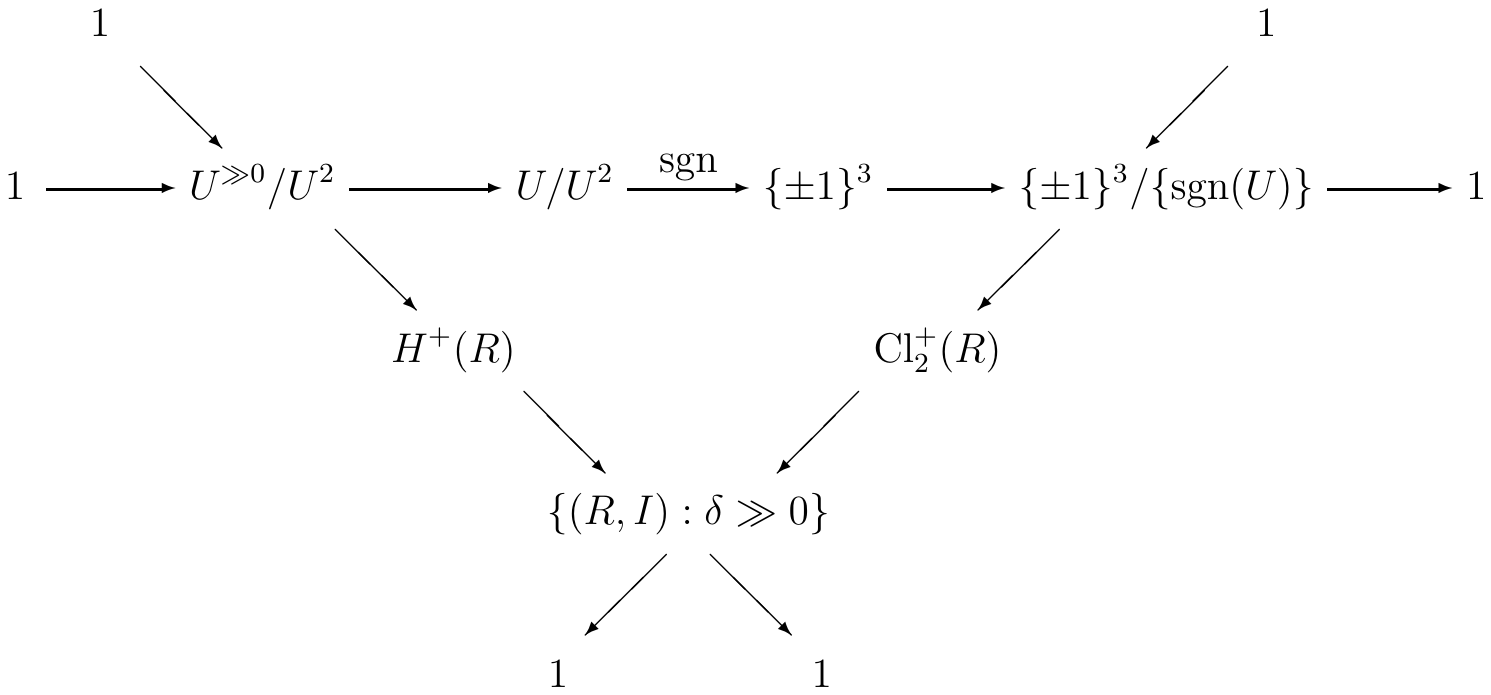} \end{center} 
Here $\{(R,I):\delta\gg0\}$ denotes the group of all equivalence
classes of ideals $I$ of $R$ for which there exists a totally positive
$\delta\in R$ with $I^2=(\delta)$.  Since $|U/U^2|=|\{\pm1\}|^3=8$, we
then also have 
$|U^{\gg0}/U^2|=|\{\pm1\}^3/\{\mbox{sgn}(U)\}|$, and therefore
$|H^+(R)|=|\Cl_2^+(R)|$, as desired.
\end{proof}

\section{Reducible pairs of ternary quadratic forms}

The bijection given in Theorem~\ref{potqfideal} includes all pairs $(A,B) \in V_\Z$ of integer-matrix ternary quadratic forms---even \emph{reducible} {pairs}, i.e., those that share a common rational zero in $\P^2$. The question then arises: which elements in $H(R)$ correspond to reducible pairs $(A,B)$ of ternary quadratic forms? 
\begin{lemma}\label{lemma2}
  Let $(A,B)$ be a {\em projective} element of $V^{(i)}_\Z$ whose binary 
cubic
  form invariant $f_{(A,B)}$ is irreducible over $\Q$, and let $(R,I,\delta)$
  denote the corresponding triple as given by
  Theorem~$\ref{potqfideal}$.  Then $(A,B)$ has a rational zero in
  $\P^2$ if and only if $\delta$ is a square in $(R\otimes\Q)^\times$.
\end{lemma}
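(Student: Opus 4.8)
The plan is to push the statement through the explicit bijection of Theorem~\ref{potqfideal}, using the description of the symmetric bilinear map $\varphi$ in~(\ref{cf}), i.e.\ Equation~(\ref{above}). Since $f_{(A,B)}$ is irreducible over $\Q$, the cubic ring $R=R(f_{(A,B)})$ is an order in a cubic \emph{field} $K:=R\otimes\Q$, so $(R\otimes\Q)^\times=K^\times$; and since $I$ is a fractional $R$-ideal of $\Z$-rank $3$, we have $I\otimes\Q=K$, so the fixed $\Z$-bases $\langle\alpha_1,\alpha_2,\alpha_3\rangle$ of $I$ and $\langle 1,\omega,\theta\rangle$ of $R$ are also $\Q$-bases of $K$. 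For $v=(v_1,v_2,v_3)\in\Q^3$ I would set $\alpha_v:=v_1\alpha_1+v_2\alpha_2+v_3\alpha_3\in K$; expanding $\alpha_v^2=\sum_{i,j}v_iv_j\,\alpha_i\alpha_j$ by means of~(\ref{above}) gives the key identity
\[
\alpha_v^2 \;=\; \delta\bigl(C(v) + B(v)\,\omega + A(v)\,\theta\bigr),
\]
where $A(v),B(v),C(v)$ denote the ternary quadratic forms with symmetric matrices $(a_{ij}),(b_{ij}),(c_{ij})$ evaluated at $v$. Everything then follows by comparing coordinates in the $\Q$-basis $\langle 1,\omega,\theta\rangle$ of $K$, together with the norm constraint of Theorem~\ref{potqfideal}.

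\textbf{($\Leftarrow$)} If $\delta=\mu^2$ with $\mu\in K^\times=I\otimes\Q$, I would write $\mu=\alpha_v$ for some $v\in\Q^3$, necessarily nonzero. Comparing the key identity for $\alpha_v^2=\mu^2$ with the trivial expansion $\mu^2=\delta=\delta(1+0\cdot\omega+0\cdot\theta)$ and dividing by $\delta\in K^\times$ forces $A(v)=B(v)=0$ (and, incidentally, $C(v)=1$). Hence $[v]\in\P^2(\Q)$ is a common zero of the conics $A$ and $B$, so $(A,B)$ is reducible.

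\textbf{($\Rightarrow$)} Conversely, given a common rational zero $[v]\in\P^2(\Q)$ of $A$ and $B$, lift it to $v\in\Q^3\setminus\{0\}$ and put $\alpha=\alpha_v$, which lies in $K^\times$ since $\langle\alpha_i\rangle$ is a $\Q$-basis of the field $K$. The key identity then collapses to $\alpha^2=\delta\,C(v)$ with $C(v)\in\Q$, and $C(v)\neq 0$ because $\alpha^2\neq 0$. It remains to show $C(v)\in(\Q^\times)^2$, for then $\delta=\alpha^2/C(v)$ is a square in $K^\times$. This is the one place where the norm condition $N(\delta)=N_S(I)^2$ from Theorem~\ref{potqfideal} is essential, and the only step requiring genuine care: applying the norm $N=N_{K/\Q}$ to $\alpha^2=\delta\,C(v)$ gives $N(\alpha)^2=N(\delta)\,C(v)^3=N_S(I)^2\,C(v)^3$, so $C(v)^3=(N(\alpha)/N_S(I))^2$ is a nonzero square in $\Q^\times$; hence $C(v)>0$ and $v_p(C(v))$ is even for every prime $p$, i.e.\ $C(v)\in(\Q^\times)^2$. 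The sign is what matters here: since $N_S(I)>0$ we get $N(\delta)>0$, whereas with the wrong sign one could only conclude that $\delta$ is $\pm$ a square, and $-1$ is never a square in the odd-degree field $K$. Apart from this sign-and-square-class bookkeeping, the whole argument is routine substitution into the identity above.
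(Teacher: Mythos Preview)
Your proof is correct and follows essentially the same approach as the paper's: both directions hinge on the identity $\alpha_v^2=\delta\bigl(C(v)+B(v)\omega+A(v)\theta\bigr)$ coming from~(\ref{above}), and the $(\Rightarrow)$ step uses the norm condition $N(\delta)=N_S(I)^2$ exactly as the paper does to force the rational $C(v)$ (the paper's $n$) to be a square. The only cosmetic difference is in $(\Leftarrow)$: the paper first passes to the equivalent triple $(R,\mu^{-1}I,1)$ and then chooses an integral basis with $\alpha_1\in I\cap\Z$, whereas you work directly over $\Q$ via the key identity; both yield the same vanishing $A(v)=B(v)=0$.
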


\begin{proof}
  Suppose $\delta=r^2$ for some invertible
  $r\in R\otimes\Q$.  Then by replacing $I$ by $r^{-1}I$ and $\delta$
  by $r^{-2}\delta$ if necessary, we may assume that $\delta=1$. Let $\alpha_1$ be the smallest positive element of $I \cap \bZ$, and extend to a basis $\langle \alpha_1, \alpha_2, \alpha_3 \rangle$ of $I$. By (\ref{above}), we have that $b_{11} = 0$ and $a_{11} = 0$, which implies that the associated pair of ternary quadratic forms has a rational zero, namely $(1:0:0)$, in $\bP^2$. 
  
  Conversely, suppose that $A$ and $B$ have a common rational zero in $\bP^2$.  If $(R,I,\delta)$
  is the corresponding triple, then this means that there exists $\alpha \in I$ such that $\phi(\alpha \otimes \alpha)=\alpha^2/\delta \in\Z$, where $\phi$ is defined as in (\ref{cf}).  Setting $\alpha^2 =
  n\delta$ for some $n\in\Z$, and taking norms to $\Z$ on both sides,
  reveals that $N(\alpha)^2=n^3N(\delta)=n^3N(I)^2$.  
Thus $n=m^2$ is a square.
This implies  that $\delta$ must be a square in $(R\otimes\Q)^\times$,
namely, $\delta=(\alpha/m)^2$.
\end{proof}

For a cubic order $R$, let $H_{\red}(R)$ be the subgroup of (equivalence classes of) projective triples $(R, I, \delta) \in H(R)$ where $\delta$ is a square in $(R \otimes \bQ)^\times$. By the previous lemma, $H_{\red}(R)$ is the subgroup of equivalence classes of projective pairs $(A,B)\in V_\Z$ for which $R(f_{(A,B)}) = R$ in their corresponding triple, and such that $(A,B)$ has a rational zero in $\bP^2$. As in the introduction, let $\cI_2(R)$ denote the 2-torsion subgroup of the ideal group of $R$, i.e., the subgroup of invertible fractional ideals $I$ of $R$ such that $I^2 = R$. We can then define a map
	$$\varphi: \cI_2(R) \rightarrow H(R) \qquad I \mapsto (R, I, 1).$$
It is evident that $\im(\cI_2(R)) \subseteq H_\red(R)$. We show that $\varphi$ in fact defines an isomorphism between~$\cI_2(R)$ and $H_\red(R)$:

\begin{theorem}\label{redthm}
The map $\varphi$ yields an isomorphism of $\cI_2(R)$ with $H_\red(R)$.
\end{theorem}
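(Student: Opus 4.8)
The plan is to show that $\varphi$ is a well-defined group homomorphism, that it is injective, and that it is surjective onto $H_{\red}(R)$; the bulk of the work is surjectivity, and that is where the main obstacle lies.

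First I would check that $\varphi$ is a well-defined homomorphism. For $I \in \cI_2(R)$ we have $I^2 = R = (1)$, so the triple $(R,I,1)$ is a legitimate projective triple, and since $\delta = 1$ is a square, $(R,I,1) \in H_{\red}(R)$; this also re-verifies $\im(\varphi)\subseteq H_{\red}(R)$. The homomorphism property is immediate from the composition law: $\varphi(I)\circ\varphi(I') = (R,I,1)\circ(R,I',1) = (R,II',1) = \varphi(II')$. For injectivity, suppose $\varphi(I) = (R,I,1)$ is the identity element of $H(R)$, i.e.\ $(R,I,1)$ is equivalent to $(R,R,1)$. By the definition of equivalence on triples, there is an automorphism $\phi$ of $R$ and a $\kappa \in (R\otimes\Q)^\times$ with $I = \kappa\phi(R) = \kappa R$ and $1 = \kappa^2\phi(1) = \kappa^2$; since $R$ is an $S_3$-cubic order, $\Aut(R)$ is trivial, so $\phi = \Id$, and $\kappa^2 = 1$ forces $\kappa = \pm 1$, hence $I = R$. (Here I use that $R\otimes\Q$ is a field, so its only square roots of $1$ are $\pm 1$.)

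The heart of the matter is surjectivity. Let $(R,I,\delta) \in H_{\red}(R)$, so $I$ is invertible, $I^2 = (\delta)$, and $\delta = r^2$ for some $r \in (R\otimes\Q)^\times$. Using the equivalence on triples with $\kappa = r^{-1}$, replace $(R,I,\delta)$ by the equivalent triple $(r^{-1}I,\ r^{-2}\delta) = (r^{-1}I, 1)$. Set $J := r^{-1}I$. Then $J$ is still an invertible fractional ideal of $R$ (invertibility is preserved under scaling by $(R\otimes\Q)^\times$), and $J^2 = r^{-2}I^2 = r^{-2}(\delta) = (1) = R$. Hence $J \in \cI_2(R)$ and $\varphi(J) = (R,J,1)$ is, by construction, equivalent to the original triple $(R,I,\delta)$. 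This shows $\varphi$ is surjective onto $H_{\red}(R)$, completing the proof that $\varphi$ is an isomorphism.

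The one point that deserves care — and the step I expect to be the main (if modest) obstacle — is confirming that the rescaling $I \mapsto r^{-1}I$ takes the projective triple $(R,I,\delta)$ to a triple of the same form \emph{with the same ring} $R$ and lands inside $H(R)$, rather than merely inside the larger set of Theorem~\ref{potqfideal}: one must observe that the ring $R$ and the equivalence class are unchanged, that $r^{-1}I$ is genuinely an $R$-ideal (not just an $R\otimes\Q$-submodule) because $r\in (R\otimes\Q)^\times$ and $J^2=R$ forces $J$ to be a fractional $R$-ideal with $J \cdot J = R$, and that the norm condition $N_S(J)^2 = N(1) = 1$ is automatically consistent since $N(J)^2 = N(J^2) = N(R) = 1$. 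Once this bookkeeping is in place, the argument is complete; no further computation with the explicit forms $(A,B)$ is needed, since Lemma~\ref{lemma2} has already identified $H_{\red}(R)$ intrinsically in terms of $\delta$ being a square.
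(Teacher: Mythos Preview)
Your proof is correct and follows essentially the same approach as the paper: both argue injectivity by showing that equivalence with $(R,R,1)$ forces $I=\kappa R$ with $\kappa^2=1$, hence $\kappa=\pm1$ and $I=R$; and both prove surjectivity by writing $\delta=r^2$ and replacing $(R,I,\delta)$ by the equivalent triple $(R,r^{-1}I,1)$ with $r^{-1}I\in\cI_2(R)$. Your version is slightly more explicit (you verify the homomorphism property and spell out the bookkeeping on $r^{-1}I$), but the argument is the same.
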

\begin{proof} The preimage of the identity $(R,R,1) \in H(R)$ can only contain 2-torsion ideals of the form $\kappa \cdot R$ where $\kappa \in (R \otimes \bQ)^\times$. To be a 2-torsion ideal, we must have $(\kappa R)^2 = R$; thus $\kappa^2 \in R^\times$ and so $\kappa \in R^\times$. Therefore, the preimage of the identity is simply the ideal $R$ and the map is injective. To show surjectivity, let $(R, I, \delta) \in H_\red(R)$. Since $\delta$ is a square by definition, let $\delta = \xi^2$ and recall that $(R,I,\delta) \sim (R, \xi^{-1}I, 1)$; thus $\xi^{-1}I \in \cI_2(R)$.
\end{proof}

\begin{corollary}\label{maxcase}
Assume that $R$ is maximal. Then $H_\red(R)$ contains only the identity element of $H(R)$, which can be represented by $(R,R,1)$. 
\end{corollary}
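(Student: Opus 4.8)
The plan is to combine Theorem~\ref{redthm} with the elementary fact that the ideal group of a maximal order is torsion-free. By Theorem~\ref{redthm}, the map $\varphi\colon \cI_2(R)\to H_\red(R)$, $I\mapsto (R,I,1)$, is a group isomorphism; hence $H_\red(R)$ is trivial precisely when $\cI_2(R)$ is trivial, and it therefore suffices to show that the only invertible fractional ideal $I$ of $R$ with $I^2=R$ is $I=R$ itself.

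Since $R$ is maximal, it is the ring of integers of its cubic field and hence a Dedekind domain. The group of nonzero invertible (equivalently, all nonzero) fractional $R$-ideals is then free abelian on the set of nonzero prime ideals: every such ideal factors uniquely as $I=\prod_{\mathfrak p}\mathfrak p^{e_{\mathfrak p}}$ with $e_{\mathfrak p}\in\Z$ and $e_{\mathfrak p}=0$ for all but finitely many $\mathfrak p$. A free abelian group has no torsion, so the relation $I^2=R$ forces $2e_{\mathfrak p}=0$, hence $e_{\mathfrak p}=0$, for every $\mathfrak p$; that is, $I=R$. (The same argument in fact shows that $\cI_n(R)$ is trivial for every $n\geq 1$ when $R$ is maximal, recovering the footnote in the introduction.) We conclude $\cI_2(R)=\{R\}$, and therefore $H_\red(R)=\{(R,R,1)\}$ via $\varphi$.

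It remains only to note that $(R,R,1)$ is the identity element of $H(R)$: under the composition law $(R,I,\delta)\circ(R,I',\delta')=(R,II',\delta\delta')$ on projective triples, $(R,R,1)$ is the evident neutral element (alternatively, since $\varphi$ is a homomorphism, the identity $R$ of $\cI_2(R)$ must be carried to the identity of $H(R)$). There is no real obstacle here: the corollary is immediate from Theorem~\ref{redthm} once one recalls that maximal orders are Dedekind domains whose fractional ideals factor uniquely into prime powers. The one point worth a moment's care is the identification of $(R,R,1)$ with the identity of $H(R)$, which is built directly into the definition of the composition law.
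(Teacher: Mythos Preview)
Your proof is correct and follows essentially the same approach as the paper: both invoke Theorem~\ref{redthm} to reduce to showing $\cI_2(R)$ is trivial, and both then observe that a maximal order is a Dedekind domain and hence has torsion-free ideal group. Your version simply spells out the unique factorization argument in more detail than the paper's one-line proof.
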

\begin{proof} Since maximal orders are Dedekind domains, the only ideal that is 2-torsion in the ideal group of $R$ is $R$. 
\end{proof}

\section{Class numbers of pairs of ternary quadratic forms}

To prove Theorem~\ref{cgvsncg}, we would like to restrict the elements
of $V_\Z$ under consideration to those that are ``irreducible'' in an
appropriate sense.
More precisely, we call a pair $(A,B)$ of
integral ternary quadratic forms in $V_\Z$ {\it absolutely
  irreducible} if \begin{itemize} \ritem{(i)} $A$ and $B$ do not
  possess a common $\Q$-rational zero as conics in $\P^2$; and
  \ritem{(ii)} the binary cubic form $f(x,y)=\Det(Ax-By)$ is
  irreducible over $\Q$.  \end{itemize} 
 
Next, we note that a pair $(A,B)\in V_\R$ of real ternary quadratic
forms gives a pair of conics in $\P^2(\C)$ which, at least in
nondegenerate cases, intersect in four points in $\P^2(\C)$.  We call
these four points the {\it zeroes} of $(A,B)$.  The action of 
$G_\R$ on
$V_\R$ is seen to have three distinct orbits of nondegenerate
elements, namely the orbits $V^{(i)}\subset V_\R$ for $i=0,1,2$, where
$V^{(i)}$ consists of the elements in $V_\R$ having $4-2i$ real zeroes
and $i$ pairs of complex zeroes in $\P^2(\C)$. 

Now let $S\subset V_\Z$ be any $G_\Z$-invariant
subset defined by finitely many congruence conditions. For each prime
$p$, let $S_p$ denote the $p$-adic closure of $S$ in
$V_{\Z_p}=V_\Z\otimes\Z_p$, and let
$M_p(S)$ denote the ``$p$-adic mass'' of $S_p$ in $V_{\Z_p}$, defined
by
\begin{equation}\label{massdef}
 M_p(S) := \sum_{x\in G_{\Z_p}\backslash S_p}
\frac{1}{\Disc_p(x)}\cdot\frac{1}{|\Stab_{G_{\Z_p}}(x)|},
\end{equation}
where $\Disc_p(x)$ denotes the discriminant of $x\in V_{\Z_p}$ as a
power of $p$, and $\Stab_{G_{\Z_p}}(x)$ denotes the stabilizer of $x$ 
in $G_{\Z_p}$.

Setting $n_i=24,4,8$ for $i=0,1,2$ respectively, we may now state the
following result counting the number of absolutely irreducible
elements $(A,B)\in S\cap V^{(i)}_\Z$, up to $G_\Z$-equivalence, having absolute
discriminant at most $X$.

\begin{theorem}\label{cna} For any $G_\Z$-invariant
subset $S\subset V_\Z$ defined by 
finitely many congruence conditions, let 
  $N^{(i)}(S;X)$ denote the number of $G_\Z$-equivalence classes of
  \emph{absolutely irreducible} elements $(A,B)\in S\cap V_\Z^{(i)}$ satisfying
  $|\Disc(A,B)|<X$.  Then 
\begin{equation}\label{ramanujan}
\lim_{X\to\infty} \frac{N^{(i)}(S;X)}X
\,\,=\,\,
\frac{1}{2n_i}\cdot
\prod_p\Bigl(\frac{p-1}{p}\cdot M_p(S)\Bigr).
\end{equation}
\end{theorem}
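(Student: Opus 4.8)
The plan is to deduce Theorem~\ref{cna} from the orbit-counting results of~\cite{density}, which handle the space $V_\Z$ of pairs of ternary quadratic forms directly, and to account carefully for the difference in the acting group (there one uses $\GL_2(\Z)\times\SL_3(\Z)$, and by the Remark $G_\Z\cong\GL_2(\Z)\times\SL_3(\Z)$, so in fact the relevant group is the same). The starting point is the asymptotic count of irreducible $\GL_2(\Z)\times\SL_3(\Z)$-orbits on $V_\Z$ of bounded discriminant established in~\cite{density}: the number of such orbits with $|\Disc|<X$ lying in a prescribed real orbit $V^{(i)}$ is asymptotic to $C_i\cdot X$ for an explicit constant $C_i$ coming from a fundamental-domain volume computation (the Davenport--Heilbronn style averaging over a fundamental domain for the action on $V_\R$, times the covolume data). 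The factor $1/(2n_i)$ is precisely this archimedean contribution: $n_i$ records the number of connected components / the order of the relevant stabilizer group $\Stab_{G_\R}(x)$ on the real orbit $V^{(i)}$, which for generic points is an extension of the $2$-torsion of the narrow class group data by a finite $2$-group, and the extra factor $1/2$ is the usual one from the discriminant polynomial having even degree behavior, i.e.\ from $\Disc$ being a square times a unit controlling the sign.

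The next step is to impose the congruence conditions defining $S$. Since $S$ is $G_\Z$-invariant and cut out by finitely many congruences, it is a union of cosets modulo some modulus $N$, equivalently $S=\bigcap_p \pi_p^{-1}(S_p')$ where $S_p'\subset V_{\Z/p^{k}\Z}$ and $S_p'=V_{\Z_p}$ for all but finitely many $p$. I would invoke the version of the main theorem of~\cite{density} that allows weighting orbits by a product of $p$-adic local conditions — this is exactly the ``sieve'' form of the Davenport--Heilbronn theorem that was already developed there and in Bhargava's density papers. The key analytic input needed to pass from the finite-level statement to the infinite product $\prod_p$ is a uniformity estimate: the number of irreducible orbits of discriminant $<X$ that are non-maximal (or satisfy a bad congruence) at a large prime $p$ must be $O(X/p^{1+\delta})$ or similar, uniformly in $p$, so that the tail of the product converges and one may interchange the limit in $X$ with the product over $p$. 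This tail estimate is precisely the place where one uses that $S$ is defined by \emph{finitely many} congruence conditions (so only finitely many $S_p$ differ from the full set), which makes the uniformity estimate needed only for the ``squarefree-type'' sieve rather than a genuinely uniform large-sieve bound; the requisite estimate is already contained in~\cite{density}.

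Then it remains to identify the resulting constant. One has to check that
\[
C_i\cdot\prod_p \mathrm{vol}(S_p) \;=\; \frac{1}{2n_i}\prod_p\Bigl(\frac{p-1}{p}M_p(S)\Bigr),
\]
where the left side is ``archimedean density times product of $p$-adic densities'' in the raw normalization of~\cite{density}, and the right side is the mass-theoretic normalization used here. This is a bookkeeping identity: the $p$-adic density of $S_p$ inside $V_{\Z_p}$, when one unfolds the definition of $M_p(S)$ in~\eqref{massdef} — a sum over $G_{\Z_p}$-orbits weighted by $1/(\Disc_p\cdot|\Stab|)$ — equals $\mathrm{vol}(S_p)$ up to the factor $(p-1)/p$, which is the local zeta value $\zeta_p(1)^{-1}$ accounting for the ``$X$'' scaling (the discriminant being a degree-$12$ relative invariant but the orbit count growing linearly means each $X$-window of discriminants contributes with the $1/\Disc_p$ weighting, producing the Euler factor $(1-1/p)$). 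I would verify this by a direct Jacobian change-of-variables computation on $V_{\Z_p}$, exactly as in~\cite{density}, so this step is routine once the normalization of~\cite{density} is recalled.

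The main obstacle is the uniformity estimate controlling orbits that are ``reducible in disguise'' or non-maximal at large primes — i.e.\ showing that the error terms in the sieve are summable over $p$ so that the Euler product is legitimate; but since this is already established in~\cite{density} for precisely this representation, here it is a matter of citing and transporting that estimate rather than proving it afresh. A secondary, more conceptual point to get right is the definition of ``absolutely irreducible'': one must check that excluding pairs $(A,B)$ with a common rational zero in $\P^2$ (condition (i)) removes only a negligible, lower-order count of orbits — this follows because such pairs correspond, via Theorem~\ref{potqfideal} and Theorem~\ref{redthm}, to triples $(R,I,\delta)$ with $\delta$ a square, hence to $2$-torsion \emph{ideals}, which for maximal $R$ forces $I=R$ (Corollary~\ref{maxcase}), and whose total count over all relevant $R$ with $|\Disc|<X$ is $o(X)$ — so they do not affect the leading term.
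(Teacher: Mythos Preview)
Your overall approach matches the paper's: quote~\cite{density} for an asymptotic expressed in terms of the $p$-adic densities $\mu_p(S)$, then convert each $\mu_p(S)$ into the mass $M_p(S)$ via a Jacobian computation. The paper carries out this conversion as an explicit lemma, proving $\mu_p(S) = |4|_p\cdot(\#G_{\F_p}/p^{12})\cdot M_p(S)$ by evaluating the Jacobian of $g\mapsto g\cdot v_0$ at the specific point $v_0$ corresponding to the triple $(\Z_p^3,\Z_p^3,1)$, with a separate case analysis at $p=2$ to extract the factor $|4|_p$; this computation is where the actual work lies, and calling it ``routine bookkeeping'' understates it. Your concern about tail estimates is also misplaced for this theorem: since $S$ is defined by \emph{finitely many} congruences, the density result of~\cite{density} applies directly without any sieve. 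The uniformity argument you sketch is invoked only later, in the proof of Theorem~\ref{diff}, when the collection $\Sigma$ may impose infinitely many local conditions.

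Your final paragraph, however, contains a genuine error. You assert that excluding pairs with a common rational zero removes only $o(X)$ orbits. This is false. By Lemma~\ref{lemma2} and Theorem~\ref{redthm}, projective reducible pairs with irreducible cubic invariant correspond exactly to elements of $\cI_2(R)$, and every cubic order $R$ has at least the trivial ideal $R\in\cI_2(R)$; the identity class $(R,R,1)\in H(R)$ is therefore always reducible. Since there are $\asymp X$ cubic orders with $|\Disc|<X$ (indeed $\asymp X$ maximal ones), the reducible orbits number $\asymp X$, not $o(X)$. The correct resolution is that the asymptotic imported from~\cite{density} already counts only those pairs whose associated quartic ring is an integral domain, and one checks that for integer-matrix pairs this condition coincides with ``absolutely irreducible'' as defined here. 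No negligibility argument is needed, and the one you offer would not succeed.
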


\begin{proof}
Let $\mu_p(S)$ denote 
the $p$-adic density of the $p$-adic closure $S_p$ of $S$ in $V_{\Z_p}$,
where we normalize the additive measure $\mu$ on $V_{\Z_p}$ so that
$\mu(V_{\Z_p})=1$.  Then \cite[Eqn.~(32)]{density} implies\footnote{Note that in \cite{density}, $V_\bZ$ is defined as the set of all \emph{integer-coefficient} pairs of ternary quadratic forms (see \cite[Eqn.~(3)]{density}). The lattice of all \emph{integer-coefficient} pairs of ternary quadratic forms contains the lattice of all integer-matrix pairs of ternary quadratic forms as an index 64 sublattice. However, the notion of discriminant in \cite{density} also differs from the one used in this paper; namely, an integer-matrix pair $(A,B) \in V_\bZ$ has discriminant equal to $\Disc(\Det(Ax-By))$ in the current paper, but has discriminant equal to $256 \cdot \Disc(\Det(Ax-By))$ according to \cite[\S2]{density}. This implies that the constant in Equation (\ref{oramanujan}) changes from $\displaystyle \frac{\zeta(2)^2\zeta(3)}{2n_i}$ as in \cite{density} to $\displaystyle\frac{2\cdot\zeta(2)^2\cdot\zeta(3)}{n_i}$ here.}
\begin{equation}\label{oramanujan}
\lim_{X\to\infty} \frac{N^{(i)}(S;X)}X
\,\,=\,\,
\displaystyle{\frac{2\cdot\zeta(2)^2\cdot\zeta(3)}{n_i}}
\prod_p
\mu_p(S)
.
\end{equation}
Theorem~\ref{cna} is thus reduced to re-expressing 
the $p$-adic density $\mu_p(S)$ in terms of 
the $p$-adic mass $M_p(S)$. This is accomplished by the following lemma:

\begin{lemma}\label{ramanujan11}
We have
  $$\displaystyle{\mu_p(S) = \left|{4}\right|_p\cdot\frac{\#G_{\F_p}}{p^{12}}\cdot M_p(S)}.$$
\end{lemma}

\begin{proof}
  We normalize the Haar measure 
$dg$ on the $p$-adic group $G_{\Z_p}$ so
  that $\int_{g\in G_{\Z_p}}dg=\#G_{\F_p} \cdot p^{-12}.$  Since
  $|\Disc(x)|_p^{-1}\cdot dx$ is a $G_{\Q_p}$-invariant measure on
  $V_{\Q_p}$,
we must have for any $v_0\in V_{\Z_p}$ of nonzero discriminant
that
\[ \int_{x\in G_{\Z_p}\cdot v_0} dx 
=c\cdot\int_{g\in G_{\Z_p}/\Stab_{G_{\Z_p}}\!(v_0)} |\Disc(gv_0)|_p\cdot dg 
=c\cdot\frac{\#G_{\F_p}\cdot p^{-12}}{\Disc_p(v_0)\cdot\#\Stab_{G_{\Z_p}}(v_0)},
\]
for some constant $c$.
The constant $c=c(v_0)$ can be determined by calculating the Jacobian of
the change of variable $g\mapsto g\cdot v_0$ from $G_{\Z_p}$ to
$V_{\Z_p}$.  Now this is an algebraic calculation, involving polynomials
in the coordinates on $G_{\Z_p}$ and $V_{\Z_p}$.  Since any two
choices of $v_0$ are $G_K$-equivalent for some finite extension $K$
of $\Q_p$ (since $V_{\Z_p}$ is a prehomogeneous vector
space~\cite{SatoKimura}, 
there is one open orbit over the algebraic closure $\bar\Q_p$ of
$\Q_p$), and since the Haar measure $dg$ naturally extends to a 
Haar measure on $G_K$, we conclude that our constant $c$ cannot depend
on $v_0$.  

It thus suffices to compute the constant $c$ for 
$v_0=\left(
\left[\begin{smallmatrix} 0& 0& 1 \\ 0& 0& 0\\1
&0 & 1\end{smallmatrix}\right],
\left[\begin{smallmatrix}0 & 1 &0 \\ 1& 1 & 0\\ 0
&0 &0 \end{smallmatrix}\right]
\right),$ 
which corresponds to the 
triple $(\Z_p^3,\Z_p^3,1)$. When $p \neq 2$, it can be shown that $G_{\Z_p}\cdot v_0\subset
V_{\Z_p}$ is defined simply by conditions modulo~$p$. Indeed, the only cubic ring over $\Z_p$ whose 
reduction modulo~$p$ is isomorphic to $\F_p^3$ is~$\Z_p^3$, and the only 
ideal class of $\Z_p^3$ having rank 3 over $\Z_p$ is~$\Z_p^3$.  
If $\delta\in\Z_p^3$ reduces to $1=(1,1,1)$ modulo~$p$, then
$\delta$ is in the same squareclass as 
$1\in(\Z_p^3)^\times$, and thus the triple 
$(\Z_p^3,\Z_p^3,\delta)$ is 
equivalent to $(\Z_p^3,\Z_p^3,1)$ for odd primes $p.$ When $p = 2$, if $\delta \in \bZ_2^3$ reduces to $1 = (1,1,1)$ modulo $p$, then there are 16 choices for the squareclass of $\delta \in \bZ_2^3$. The group $\Aut(\Z_2^3)\cong S_3$ acts on these 16 squareclasses, and the squareclass of $\delta=(1,1,1)$ lies in a single orbit. Furthermore, $\Aut(\Z_2^3;\Z_2^3,\delta)$, for $\delta$ in any one of these 16 squareclasses, is simply the stabilizer of that squareclass under this action of $S_3$.  It follows by Corollary~\ref{stabcor} and the orbit-stabilizer theorem that the $2$-adic mass of $G_{\Z_2}\cdot v_0$---i.e., the $2$-adic mass of the $G_{\Z_2}$-orbit of all $(A,B)\in V_{\Z_2}$ corresponding to the triple $(\Z_2^3,\Z_2^3,1)$---is exactly $1/16$ of the $2$-adic mass of the set of all $(A,B)\in V_{\Z_2}$ whose reduction modulo~2 corresponds to the triple $(\F_2^3,\F_2^3,1)$. 

Now the cardinality of $V_{\F_p}$ is $p^{12}$.  Therefore, if $p\neq 2$, then we see that the measure of $G_{\Z_p}\cdot v_0$
in $V_{\Z_p}$ is equal to $p^{-12}$ times the cardinality
of $G_{\F_p}\cdot \bar{v}_0$ in $V_{\F_p}$,
where $\bar{v}_0$ denotes the reduction of $v_0$ modulo $p$; if $p = 2$, then the measure of $G_{\Z_p}\cdot v_0$ is equal to $p^{-12}$ times the cardinality of $G_{\bF_2}\cdot \bar{v}_0$ in $V_{\F_2}$ times $1/16$. 
Since for our choice of $v_0$, we have $U_2^+(\bZ_p^3) = 4 = U_2^+(\bF_p^3)$ for odd primes $p$, and $U_2^+(\bZ_2^3) = 4 = 4\cdot U_2^+(\bF_2^3)$, we conclude using Corollary \ref{stabcor} that $\#\Stab_{G_{\Z_p}}(v_0)=\#\Stab_{G_{\F_p}}(\bar{v}_0)=24$ when $p \neq 2$, and $\#\Stab_{G_{\Z_2}}(v_0)= 4\cdot\#\Stab_{G_{\F_2}}(\bar{v}_0) = 24$. Thus, by the orbit-stabilizer theorem, we obtain
\[ \int_{x\in G_{\Z_p}\cdot v_0} dx = \left|{16}\right|_p\cdot p^{-12} \cdot
\#\bigl(G_{\F_p}\cdot\bar{v}_0\bigr) 
= \left|{16}\right|_p \cdot \frac{\#G_{\F_p}\cdot p^{-12}}{\#\Stab_{G_{\F_p}}(v_0)}
= \left|{4}\right|_p \cdot \frac{\#G_{\F_p}\cdot p^{-12}}{\Disc_p(v_0)\cdot\#\Stab_{G_{\Z_p}}(v_0)},
\]
since $\Disc_p(v_0)=1$ for our choice of $v_0$.  Thus we must have
$c=\left|{4}\right|_p$ for any choice of $v_0$ satisfying
$\Disc(v_0)\neq 0$, and
the lemma follows.
\end{proof}

\noindent 

Returning to the proof of Theorem~\ref{cna}, we observe that $\#G_{\F_p}=(p^2-1)(p^2-p)\cdot \linebreak(p^3-1)(p^3-p)(p^3-p^2)/(p-1)$, and so by Lemma~\ref{ramanujan11}, we have
\[\frac{2\cdot\zeta(2)^2\cdot\zeta(3)}{n_i}\cdot\prod_p\mu_p(S) = 
\frac{\zeta(2)^2\zeta(3)}{2n_i}\cdot\prod_p 
\Bigl(1-\frac1{p^2}\Bigr)\cdot
\Bigl(1-\frac1{p^3}\Bigr)\cdot
\Bigl(1-\frac1{p^2}\Bigr)\cdot
\Bigl(\frac{p-1}{p}\cdot M_p(S)\Bigr),\]
yielding Theorem~\ref{cna}.
\end{proof}

To prove Theorems~\ref{cgvsncg} and \ref{diff}, we choose
appropriate sets $S$ on which to apply
Theorem~\ref{cna}.
This is carried out in the next section.

\section{Proof of Theorems~\ref{cgvsncg} and ~\ref{diff}}

Our goal is to count the $G_\bZ$-orbits of pairs of forms in $V_\bZ^{(i)}$ of bounded absolute discriminant that correspond, under the bijection described in Theorem~\ref{potqfideal}, to equivalence classes of triples $(R,I,\delta)$ where $R$ lies in some {acceptable} family $\Sigma$ of cubic orders and $I$ is projective. More precisely, for each prime $p$, let $\Sigma_p$ be a set of isomorphism classes of
nondegenerate cubic rings over $\Z_p$.  
We denote the
collection $(\Sigma_p)$ of these local specifications over all $p$
by $\Sigma$.  As in the introduction, we say that the collection $\Sigma=(\Sigma_p)$ is {\it
  acceptable} if, for all sufficiently large $p$, the set~$\Sigma_p$
contains all {maximal} cubic rings over $\Z_p$ that are not totally ramified.
Finally, for a cubic order~$R$ over $\Z$, we write ``$R\in\Sigma$''
(or say that ``$R$ is a $\Sigma$-order'') if $R\otimes\Z_p\in\Sigma_p$
for all $p$.  

Now fix any acceptable collection $\Sigma=(\Sigma_p)$ of local
specifications.  Let $S=S({\Sigma})$
denote the set of all absolutely irreducible elements $(A,B)\in
V_\Z$ such that, in the corresponding triple $(R,I,\delta)$, we
have that $R\in\Sigma$ and $I$ is invertible as an ideal class of $R$
(implying that $I\otimes\Z_p$ is the trivial ideal class of
$R\otimes\Z_p$).  Then we wish to count $N^{(i)}(S(\Sigma); X)$, the number of $G_\bZ$-equivalence classes of absolutely irreducible elements $(A,B) \in S(\Sigma) \cap V_{\bZ}^{(i)}$ satisfying $|\Disc(A,B)|<X$.

Although $S$ might be defined by infinitely
many congruence conditions, the estimate provided
in~\cite[Prop.~23]{density} and \cite[Lem.~3.7]{geosieve} (and the fact that $\Sigma$ is
acceptable) implies that Equation~(\ref{ramanujan}) continues to hold
for the set $S$ (this follows from an identical argument as in \cite[\S3.3]{density} or \cite[\S3.4]{geosieve}). More precisely,
\begin{equation}\label{ramanujangen}
\lim_{X\rightarrow \infty} \frac{N^{(i)}(S(\Sigma);X)}{X} = \frac{1}{2n_i} \cdot \prod_p \left(\frac{p-1}{p}\cdot M_p(S(\Sigma))\right).
\end{equation}

We also have the following algebraic lemma which describes when a pair $(A,B)$ lies in $V_\bZ^{(i)}$  ($i\in\{0,1,2\}$) in terms of the associated triple $(R,I,\delta)$:

\begin{lemma}\label{lemma1}
Let $(A,B)$ be an element of 
  $V^{(i)}_\Z$ 
whose binary cubic form invariant $f$ is irreducible  over $\Q$, 
and let $(R,I,\delta)$ be the corresponding triple as
  given by Theorem~$\ref{potqfideal}$. 
\begin{itemize}  
\item[\rm{(a)}]
If $i=0$, then $R$ is an order in a totally real cubic field, and
$\delta$ is a totally positive element of $R\otimes\Q$.
\item[\rm{(b)}]
If $i=1$, then $R$ is an order in a complex cubic field.
\item[\rm{(c)}]
If $i=2$, then $R$ is an order in a totally real cubic field, and
$\delta\in R\otimes\Q$ is not a totally positive element. 
\end{itemize}
\end{lemma}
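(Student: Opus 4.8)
The plan is to read off the real signature of $R$ and the sign structure of $\delta$ directly from the number of real zeroes of the pair $(A,B)$, using the explicit form of the map in Theorem~\ref{potqfideal}. First I would pass to the real points: over $\R$ the étale cubic algebra $R\otimes\R$ is either $\R\times\R\times\R$ (the totally real case) or $\R\times\C$ (the complex case), and these two cases are distinguished by the splitting type of the binary cubic form invariant $f$ over $\R$ — three real roots versus one real root. Since $\Disc(A,B)=\Disc(f)$, the sign of $\Disc(A,B)$ already separates $i=1$ (where $\Disc<0$, one real root, $R\otimes\R\cong\R\times\C$) from $i\in\{0,2\}$ (where $\Disc>0$, three real roots, $R\otimes\R\cong\R^3$). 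This immediately gives part (b) and the ``totally real'' halves of parts (a) and (c); what remains is to distinguish $i=0$ from $i=2$ via the total positivity of $\delta$.

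For that, recall from~(\ref{cf}) that $(A,B)$ encodes the symmetric bilinear form $\varphi:I\otimes I\to R/T$, $\alpha\otimes\beta\mapsto(\alpha\beta/\delta)\bmod T$. Tensoring with $\R$ and using that $R\otimes\R\cong\R^3$ in the totally real case, I would observe that for each of the three real embeddings $\sigma_k:R\otimes\R\to\R$ one gets a real quadratic form $q_k$ on $I\otimes\R\cong\R^3$ given (up to the $\R$-part that lands in $T\otimes\R$) by $v\mapsto \sigma_k(v)^2/\sigma_k(\delta)$; in particular $q_k$ is definite, and positive-definite precisely when $\sigma_k(\delta)>0$. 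A common zero of $A$ and $B$ in $\P^2$ over $\R$ is exactly a common isotropic vector for the pencil spanned by these forms, equivalently (after diagonalizing the pencil, which is possible generically) a coordinate direction in which the corresponding $q_k$ is isotropic; since each $q_k$ is definite it has no real isotropic vector, but a suitable real linear combination $\lambda_1 q_1+\lambda_2 q_2+\lambda_3 q_3$ (with the $\lambda_k$ of mixed sign, interpreted appropriately) can be indefinite and hence have real zeroes. Counting: the pencil $xA-yB$, viewed over $\R$, has real zeroes coming from real roots $(x:y)$ of $f$ at which the resulting ternary form $xA-yB$ is indefinite. A cleaner route is to work with the quadric $\Q$-pencil directly: over $\R\times\R\times\R$ the number of real common zeroes of the two conics is determined by how many of the three ``eigen-quadrics'' (one per real place) are definite, and one checks that $4-2i$ real zeroes occur exactly when the number of real places $\sigma$ with $\sigma(\delta)>0$ is $3-i$; thus $i=0$ corresponds to all three signs positive (i.e.\ $\delta$ totally positive) and $i=2$ corresponds to $\delta$ not totally positive.

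Concretely I would carry this out by choosing the normal basis $\langle 1,\omega,\theta\rangle$ and the ideal basis so as to put the pencil in a convenient form — for instance, after base change to $\R^3$ one may take $I\otimes\R=\R^3$ with $\delta=(\delta_1,\delta_2,\delta_3)$ and $\omega,\theta$ chosen to separate the three coordinate components — and then the two symmetric matrices $A,B$ become simultaneously diagonalizable over $\R$ with the sign pattern of the diagonal entries governed by $\mathrm{sgn}(\delta_1),\mathrm{sgn}(\delta_2),\mathrm{sgn}(\delta_3)$. A common real zero in $\P^2$ then exists, and a short sign-counting argument shows that the number of real intersection points of the two conics is $2\cdot\#\{k:\delta_k<0\}$ when not all $\delta_k$ have the same sign, and $0$ (i.e.\ four complex zeroes, $i=2$) only when — wait, this is exactly the subtle point. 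The \textbf{main obstacle} is precisely this sign-counting/linear-algebra step: correctly matching the combinatorics of the real zero locus of a generic pencil of real conics on $\P^2$ (how many of the $4$ intersection points are real) to the sign vector $(\mathrm{sgn}\,\sigma(\delta))_\sigma\in\{\pm1\}^3$, keeping track of the degenerate loci one must avoid and making sure the genericity hypothesis (irreducibility of $f$, nondegeneracy) rules them out. Once that correspondence $4-2i \leftrightarrow 3-i = \#\{\sigma:\sigma(\delta)>0\}$ is pinned down, parts (a), (b), (c) all follow immediately: $i=0\iff$ all three signs positive $\iff\delta\gg0$ (totally real case), and $i=2\iff$ not all positive $\iff\delta$ not totally positive (still totally real, since $\Disc>0$).
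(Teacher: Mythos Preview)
Your reduction of part (b) and the ``totally real'' halves of (a) and (c) to the sign of $\Disc(A,B)=\Disc(f)$ is correct and in the same spirit as the paper. Where you diverge is in distinguishing $i=0$ from $i=2$: you attempt a direct combinatorial match between the sign vector $(\mathrm{sgn}\,\sigma_k(\delta))_k$ and the number of real common zeroes of the pencil, and you correctly flag this sign-counting as the main obstacle (indeed, your intermediate formula $2\cdot\#\{k:\delta_k<0\}$ goes in the wrong direction, as you noticed with the ``wait'').

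The paper bypasses this obstacle entirely. The key observation you are missing is that the quantities at stake --- the isomorphism type of $R\otimes\R$ and the squareclass/sign pattern of $\delta$ in $(R\otimes\R)^\times$ --- are invariants of the $G_\R$-orbit of $(A,B)$, since they depend only on the equivalence class of the triple $(R\otimes\R,I\otimes\R,\delta)$ over $\R$. Because $G_\R$ acts on the nondegenerate locus of $V_\R$ with exactly the three orbits $V^{(0)},V^{(1)},V^{(2)}$, it suffices to pick a single explicit representative $(A_i,B_i)$ in each $V^{(i)}$, compute the associated triple, and read off the sign of $\delta$. No sign-counting for a general pencil is needed. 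Your diagonalization idea is essentially a way of producing such representatives, but once you realize that one point per orbit is enough, you can simply write down three concrete pairs (e.g., the one used elsewhere in the paper for $(\R^3,\R^3,1)$ lands in $V^{(0)}$ with $\delta=1\gg0$) and be done.

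So: your route can be made to work, but it is harder than necessary and you have not completed the hard step. The paper's argument is a two-line orbit check.
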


\begin{proof}
If the binary cubic form invariant $f$ of $(A,B)$ is irreducible over $\Q$,
then the cubic ring $R=R(f)$ is a domain~\cite{DF} (see
also~\cite[Prop.~11]{simple}), 
and thus an order
in a cubic field.  To check the assertions about $\delta$, it suffices
to base change to the real numbers $\R$, where one can simply check
the assertion at one point of each of the three orbits $V^{(i)}$ for
$i=0$, 1, and 2.
\end{proof}	\\
Combining Theorem~\ref{redthm} with Lemmas~\ref{hr2},~\ref{hrp}, and~\ref{lemma1},
we obtain 
\begin{equation}\label{irredcountgen}
\begin{array}{rcl}
N^{(0)}(S(\Sigma), X) + N^{(2)}(S(\Sigma), X) &=& 
	 \displaystyle\sum_{\scriptstyle{R \in \Sigma,}\atop{\scriptstyle 0<\Disc(R)<X}} 4\cdot|\Cl_2(R)| - |\cI_2(R)|,  \\[.425in]
N^{(1)}(S(\Sigma), X) &=&	\displaystyle\sum_{\scriptstyle{R \in \Sigma,}\atop{\scriptstyle -X < \Disc(R) < 0}} \!\!\!2 \cdot |\Cl_2(R)| - |\cI_2(R)|, \\[.425in]
N^{(0)}(S(\Sigma),X) &=&  \displaystyle\sum_{\scriptstyle{R \in \Sigma,}\atop{\scriptstyle 0<\Disc(R)<X}} \;\;\;\;|\Cl^+_2(R)| - |\cI_2(R)| .
\end{array}
\end{equation}

By Theorem~\ref{potqfideal} and Corollary~\ref{stabcor}, 
the $p$-adic masses of $S=S(\Sigma)$ defined in (\ref{massdef}) can 
be expressed as
\begin{equation}\label{massdef2}
 M_p(S(\Sigma)) = \sum \frac{1}{\Disc_p(R)\cdot|\Aut(R;I,\delta)|\cdot|U_2^+(R_0)|},
\end{equation}
where the sum is over all equivalence classes of triples
$(R,I,\delta)$ over $\Z_p$ represented in $S_p$.
If $R\in\Sigma_p$ is a nondegenerate
cubic ring over $\Z_p$, then in a corresponding triple
$(R,I,\delta)$ we can always choose~$I=R$, since $I$ is a principal
ideal (recall that invertible means locally principal).  
The number of elements $\delta$ yielding distinct valid triples $(R,R,\delta)$ over $\bZ_p$ (in the sense of Theorem~\ref{potqfideal}), up to equivalence, is equal to the number of $\Aut(R)$-orbits on the set $U^+(R)/{U^+(R)}^{\times 2}$, where $U^+(R)$ denotes the group of units of $R$ having norm 1. If $\delta \in U^+(R)$ has image $\overline{\delta} \in U^+(R)/{U^+(R)}^{\times 2}$, then the stabilizer of $\overline{\delta}$ under this action of $\Aut(R)$ is given by $\Aut(R;R,\delta)$ for any lift $\delta$ of $\overline{\delta}$. By the orbit-stabilizer theorem, we then obtain that

\begin{equation}\label{massdef3}
 M_p(S(\Sigma)) = \sum \frac{|U^+(R)/U^+(R)^{\times2}|}
{\Disc_p(R)\cdot|\Aut(R)|
\cdot|U_2^+(R)|}
\end{equation}
where the sum is over all isomorphism classes of cubic rings $R$ over
$\Z_p$ lying in $\Sigma_p$. 

\begin{lemma}\label{weird}
Let $R$ be a nondegenerate cubic ring over $\Z_p$.  Then
\[\frac{|U^+(R)/U^+(R)^{\times 2}|}{|U_2^+(R)|}
\]
is $1$ if $p\neq 2$, and $4$ if $p=2$.
\end{lemma}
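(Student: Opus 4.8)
The plan is to exploit the fact that $U^+(R)$ is a compact abelian $p$-adic group of dimension $2$, and to reduce the assertion to the elementary index identity for the squaring endomorphism of such a group.

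First I would unpack the hypothesis: $R$ nondegenerate means exactly that $K:=R\otimes_{\Z_p}\Q_p$ is an \'etale cubic $\Q_p$-algebra, so its trace form is nondegenerate and in particular the trace ${\rm tr}\colon K\to\Q_p$ is a nonzero, hence surjective, linear functional. From this I would deduce the standard facts I need about $U^+(R)$: the group $R^\times$ is a topologically finitely generated compact abelian $p$-adic Lie group of dimension $3=\dim_{\Q_p}K$ (it is open in $\O_K^\times$, containing $1+\mathfrak c$ for $\mathfrak c$ the conductor of $R$ in $\O_K$); the norm map $N\colon R^\times\to\Z_p^\times$ is a homomorphism whose differential at $1$ is ${\rm tr}$, hence has open image; and therefore $U^+(R)=\ker N$ is again a topologically finitely generated compact abelian $p$-adic group, now of dimension $3-1=2$. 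By the structure theorem for such groups, $U^+(R)\cong\Z_p^2\times F$ for a finite abelian group $F$.

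With this decomposition in hand the computation is immediate. Since $\Z_p$ is $2$-torsion free, $U_2^+(R)=U^+(R)[2]\cong F[2]$; and since $\Z_p/2\Z_p$ has order $|2|_p^{-1}$, the cokernel of squaring is $U^+(R)/U^+(R)^{\times2}\cong(\Z_p/2\Z_p)^2\times F/F^2$, of order $|2|_p^{-2}\cdot|F/F^2|$. For any finite abelian group the multiplication-by-$2$ map has kernel and cokernel of equal order, so $|F[2]|=|F/F^2|$, and the ratio in the lemma collapses to $|2|_p^{-2}$, which is $1$ for $p\neq 2$ and $4$ for $p=2$.

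The only genuine input is the determination that the free rank of $U^+(R)$ is exactly $2$, and this is precisely where nondegeneracy of $R$ is used, via surjectivity of the trace. If one wishes to avoid $p$-adic Lie theory altogether, the same rank computation can be done by hand: $R^\times$ has finite index in $\O_K^\times\cong\mu\times\Z_p^3$ (where $\mu$ is the finite group of roots of unity in $K$), so $R^\times\cong(\text{finite})\times\Z_p^3$, and restricting $N$ shows $U^+(R)\cong(\text{finite})\times\Z_p^2$; the two order computations then run verbatim. The one place to be careful is the prime $p=2$: one should check that the factor $(\Z_2/2\Z_2)^2$, of order $4$, accounts for the entire discrepancy and that the finite part $F$ contributes a factor of exactly $1$ to the ratio whether or not its order is even.
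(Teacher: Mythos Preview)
Your proof is correct and follows essentially the same approach as the paper: decompose $U^+(R)\cong \Z_p^2\times F$ with $F$ finite abelian, observe that $|F[2]|=|F/F^2|$ so the finite part contributes a factor of $1$ to the ratio, and compute the contribution of the free part as $|\Z_p/2\Z_p|^2=|2|_p^{-2}$. The only difference is that you supply more justification for the rank being exactly $2$ (via the differential of the norm being the trace, surjective by nondegeneracy), whereas the paper simply asserts that $U^+(R)$ has free rank $2$.
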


\begin{proof}
  The unit group of $R$, as a multiplicative group, is isomorphic to
  $F'\times G'$, where $F'$ is a finite abelian group and $G'$ is 
torsion-free and may naturally be viewed as a free
rank 3 $\Z_p$-module.  Hence the submodule $U^+(R)$,
  consisting of those units having norm 1, is isomorphic to $F\times
  G$, where $F$ is a finite abelian group and $G$ is free of rank 2 as a
  $\Z_p$-module.  

  Let $F_2$ denote the 2-torsion subgroup of $F$.  Since $F_2$ is the
  kernel of the multiplication-by-2 map on $F$, it is clear that
  $|F/(2\cdot F)|/|F_2|=1$.  Therefore, it suffices to check the lemma on the
  ``free'' part $G$ of $U^+(R)$, namely, on the $\Z_p$-module $\Z_p^2$,
  where the result is clear.  (The case $p=2$ differs because, while
  $2\cdot\Z_p^2 = \Z_p^2$ for $p\neq 2$, the $\Z_2$-module 
$2\cdot\Z_2^2$ has index 4 in $\Z_2^2$.)
\end{proof}

Combining (\ref{ramanujangen}), (\ref{massdef3}), and
Lemma~\ref{weird}, we obtain 
\begin{equation}\label{ramanujan2}
\lim_{X\to\infty} \frac{N^{(i)}(S(\Sigma);X)}X
=\frac{2}{n_i}\cdot
\prod_p\Bigl(\frac{p-1}{p}\cdot \sum_{R\in\Sigma_p}
\frac{1}{\Disc_p(R)}\cdot\frac1{|\Aut(R)|}\Bigr).
\end{equation}
Let $c_\Sigma$ denote the Euler product occurring 
in (\ref{ramanujan2}), i.e., 
\begin{equation*}\label{csigma}
c_\Sigma := \prod_p\Bigl(\frac{p-1}{p}\cdot \sum_{R\in\Sigma_p}
\frac{1}{\Disc_p(R)}\cdot\frac1{|\Aut(R)|}\Bigr).
\end{equation*}
We now recall the following result from \cite[Thm.~8]{simple}:

\begin{lemma}\label{dht}
\hfill
\begin{itemize}
\item[$($a$)$] The number of totally real $\Sigma$-orders $\O$ with
  $|\Disc(\O)|<X$ is $\displaystyle\frac{1}{12}c_\Sigma\cdot X+o(X)$. 

\item[$($b$)$] The number of complex $\Sigma$-orders $\O$ with $|\Disc(\O)|<X$ is
  $\displaystyle\frac{1}{4} c_\Sigma\cdot X+o(X)$.
\end{itemize}
\end{lemma}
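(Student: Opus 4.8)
The plan is to derive Lemma~\ref{dht} by relating the generating Euler product $c_\Sigma$ to the one that counts cubic \emph{rings} (as opposed to ideal data) via the Delone--Faddeev--Gan--Gross--Savin parametrization, and then splitting by real signature. Concretely, the starting point is the well-known mass formula for cubic rings over $\Z_p$: the ``$p$-adic mass'' $\sum_{R}\frac{1}{\Disc_p(R)|\Aut(R)|}$, summed over \emph{all} isomorphism classes of cubic rings $R$ over $\Z_p$, equals $1 + \frac1p - \frac1{p^2} - \frac1{p^3}$ (equivalently $(1+p^{-1})(1-p^{-2})\cdot\frac{p}{p-1}\cdot\frac{1}{?}$ — the precise rational function is the one recorded in \cite{simple}). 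More to the point, \cite[Thm.~8]{simple} is itself a statement about counting cubic orders with local conditions weighted by $1/|\Aut|$: one shows that the number of cubic $\Sigma$-orders $\O$ with $|\Disc(\O)|<X$, weighted by $\frac{1}{|\Aut(\O)|}$, is asymptotic to $C\cdot c_\Sigma\cdot X$ for an absolute constant $C$, where $c_\Sigma$ is exactly the Euler product displayed above. So the real content of Lemma~\ref{dht} is twofold: (i) identify the absolute constant $C$ in the totally real and complex cases separately, and (ii) observe that the $1/|\Aut|$ weighting may be dropped at the cost of changing $C$ only on a density-zero set, since $100\%$ of cubic orders are $S_3$-orders and hence have trivial automorphism group.

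The key steps, in order, are as follows. First, invoke the Delone--Faddeev correspondence: cubic rings $R$ over $\Z$ with $0<\Disc(R)<X$ (resp.\ $-X<\Disc(R)<0$), weighted by $1/|\Aut(R)|$, are in measure-preserving bijection with $\GL_2(\Z)$-equivalence classes of integral binary cubic forms $f$ of the corresponding discriminant sign and $|\Disc(f)|<X$, again weighted by $1/|\Stab(f)|$. Second, recall Davenport's classical asymptotics for the number of $\GL_2(\Z)$-classes of irreducible binary cubic forms of bounded discriminant, split by sign of the discriminant: the count of forms with $0<\Disc<X$ grows like $\frac{1}{12}\zeta(2)^{-1}\cdots X$ and with $-X<\Disc<0$ like $\frac{1}{4}\zeta(2)^{-1}\cdots X$ — in fact the ratio of the two leading constants is $1:3$, reflecting the two real orbits of $\GL_2(\R)$ on binary cubic forms having a real automorphism group of different sizes ($\{\pm1\}$ versus the order-$6$ dihedral stabilizer of $x(x^2+y^2)$-type forms over $\R$). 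Third, sieve to the local conditions $\Sigma_p$: this converts the pure Davenport constant into $c_\Sigma$, precisely because the local density of cubic rings over $\Z_p$ lying in $\Sigma_p$, relative to all cubic rings, is $\bigl(\frac{p-1}{p}\sum_{R\in\Sigma_p}\frac{1}{\Disc_p(R)|\Aut(R)|}\bigr)$ divided by the full local mass — and the full local masses multiply to cancel the $\zeta$-factors, yielding the clean product $c_\Sigma$. (The tail estimate needed to push the sieve through for acceptable $\Sigma$, even when infinitely many congruence conditions are imposed, is exactly the uniformity estimate of \cite[Prop.~23]{density}/\cite[Lem.~3.7]{geosieve} already cited in the paper.) Fourth, discard the $1/|\Aut|$ weights: orders in non-$S_3$ cubic fields (i.e., cyclic cubic fields and orders with extra automorphisms) number $O(\sqrt X)$ or are otherwise $o(X)$, so replacing $\sum 1/|\Aut(\O)|$ by $\sum 1$ changes the leading term by $o(X)$; this gives the unweighted counts $\frac{1}{12}c_\Sigma X + o(X)$ and $\frac{1}{4}c_\Sigma X + o(X)$.

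The main obstacle I expect is pinning down the two leading constants $\frac{1}{12}$ and $\frac14$ and, in particular, the factor-of-$3$ discrepancy between the totally real and complex cases. This is not deep but it is the step where signs, normalizations of $\Disc$, and the sizes of real stabilizers all have to be tracked with care: a totally real cubic ring corresponds to a binary cubic form with three real roots (discriminant $>0$) and generic real stabilizer of order $2$, whereas a complex cubic ring corresponds to a form with one real and one pair of complex roots (discriminant $<0$) with generic real stabilizer again of order $2$, so the factor of $3$ must instead come from the relative volumes of the two $\GL_2(\R)$-orbits in the fundamental domain used in Davenport's count — i.e., from the shape of the fundamental region, not from stabilizers. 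One must therefore quote Davenport's theorem in the signed form and simply read off that the positive-discriminant leading constant is one-third of the negative-discriminant one, then carry the $\Sigma$-sieve through both uniformly. Everything else — the Delone--Faddeev dictionary, the Euler-product bookkeeping, and the negligibility of non-$S_3$ orders — is routine and can be cited wholesale from \cite{simple} and \cite{density}.

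\begin{proof}
This is \cite[Thm.~8]{simple}; we recall the argument. By the Delone--Faddeev and Gan--Gross--Savin parametrizations, isomorphism classes of cubic rings $R$ over $\Z$, weighted by $1/|\Aut(R)|$, correspond to $\GL_2(\Z)$-equivalence classes of integral binary cubic forms $f$, weighted by $1/|\Stab_{\GL_2(\Z)}(f)|$, with $\Disc(f)=\Disc(R)$. Davenport's asymptotics for the number of $\GL_2(\Z)$-classes of irreducible binary cubic forms of bounded discriminant, split according to the sign of the discriminant, give a leading constant for positive discriminant that is exactly one-third of that for negative discriminant, the factor of $3$ arising from the relative volumes of the two $\GL_2(\R)$-orbits on real binary cubic forms in the relevant fundamental domain. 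Imposing the local conditions $\Sigma_p$ multiplies the $p$-adic density of cubic rings lying in $\Sigma_p$ into the count; since the full $p$-adic mass of all cubic rings over $\Z_p$ is a rational function of $p$ whose Euler product cancels the zeta factors appearing in Davenport's constant, the sieved count acquires precisely the Euler product $c_\Sigma$ as its leading coefficient. The tail estimate of \cite[Prop.~23]{density} (see also \cite[Lem.~3.7]{geosieve}) justifies this sieve even for acceptable $\Sigma$ defined by infinitely many congruence conditions. Finally, since $100\%$ of cubic orders are $S_3$-orders, which have trivial automorphism group, the orders with $|\Aut(\O)|>1$ contribute only $o(X)$, so the $1/|\Aut(\O)|$-weighted count and the unweighted count have the same leading term. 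Combining these facts yields $\frac{1}{12}c_\Sigma X + o(X)$ in the totally real case and $\frac14 c_\Sigma X + o(X)$ in the complex case.
\end{proof}
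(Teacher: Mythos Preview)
Your proposal is correct and matches the paper's treatment: the paper does not prove Lemma~\ref{dht} at all but simply cites it as \cite[Thm.~8]{simple}, and your proof does the same while additionally sketching the argument from that reference. The sketch you give---Delone--Faddeev/Gan--Gross--Savin to pass to binary cubic forms, Davenport's signed asymptotics for the $1{:}3$ ratio, the congruence sieve with the uniformity estimate from \cite{density,geosieve} to produce $c_\Sigma$, and discarding the $1/|\Aut|$ weight via negligibility of cyclic cubic orders---is an accurate outline of what \cite{simple} does, so there is nothing to correct.
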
   
Thus, we may conclude from (\ref{irredcountgen}),  (\ref{ramanujan2}), and Lemma~\ref{dht} that
\begin{equation}
\begin{array}{rcccl}
\frac{\displaystyle\sum_{{\mbox{\scriptsize $R \in \Sigma$,}}\atop{\mbox{\scriptsize $0<\Disc(R)<X$}}} |\Cl_2(R)| - \displaystyle\frac{1}{4}\cdot |\cI_2(R)|}{\displaystyle\sum_{{\mbox{\scriptsize $R \in \Sigma$,}}\atop{\mbox{\scriptsize $0<\Disc(R)<X$}}} 1} &=& \displaystyle\frac{1}{4}\cdot\frac{\displaystyle\frac{2}{n_{0}}\cdot c_\Sigma + \frac{2}{n_2} \cdot c_\Sigma}{\displaystyle\frac{1}{12}\cdot c_\Sigma} &=& 1, \vspace{.15in}\\	\frac{\displaystyle\sum_{{\mbox{\scriptsize $R \in \Sigma$,}}\atop{\mbox{\scriptsize $-X<\Disc(R)<0$}}} |\Cl_2(R)| - \displaystyle\frac{1}{2}|\cI_2(R)|}{\displaystyle\sum_{{\mbox{\scriptsize $R \in \Sigma$,}}\atop{\mbox{\scriptsize $-X<\Disc(R)<0$}}} 1} &=& \displaystyle\frac{1}{2}\cdot \frac{\displaystyle\frac{2}{n_1} \cdot c_\Sigma}{\displaystyle{\frac{1}{4}}c_\Sigma} &=& 1, \quad \mbox{and} \vspace{.15in}\\
\frac{\displaystyle\sum_{{\mbox{\scriptsize $R \in \Sigma$,}}\atop{\mbox{\scriptsize $0<\Disc(R)<X$}}} |\Cl^+_2(R)| - |\cI_2(R)|}{\displaystyle\sum_{{\mbox{\scriptsize $R \in \Sigma$,}}\atop{\mbox{\scriptsize $0<\Disc(R)<X$}}} 1} &=&  \displaystyle\frac{\displaystyle\frac{2}{n_0} \cdot c_\Sigma}{\displaystyle{\frac{1}{12}}c_\Sigma} &=& 1.
\end{array}
\end{equation}
This proves Theorem~\ref{diff}. Furthermore, when $\Sigma$ is an acceptable collection of maximal cubic orders, then Theorem~\ref{diff} and Corollary~\ref{maxcase} imply Theorem~\ref{cgvsncg}, since $|\cI_2(R)| =1$ when $R$ is maximal.

\section{Proofs of Corollaries}

We conclude by proving Corollaries~\ref{odd}, \ref{nocontrast},
\ref{contrast}, \ref{mixed1}, and \ref{mixed2}.  As remarked in the introduction, we in fact prove a generalization of each of these corollaries.  Namely, let $\Sigma$ be any
acceptable collection of local specifications of maximal cubic orders (thus possibly defined by infinitely many local conditions).  Then we prove that
Corollaries~\ref{odd} and  \ref{nocontrast}--\ref{mixed2} continue to hold, with the same percentages, for the family of maximal orders defined by any such $\Sigma$. 

We begin by demonstrating that in the family of all real (resp.\ complex) cubic fields defined by $\Sigma$, a proportion of at least 75\% (resp.~50\%) have odd class number. 

\vspace{.2in}
\noindent {\bf Proof of Corollary~\ref{odd}:} 
Indeed, suppose a lower density of less than 75\% (resp.\ 50\%) of totally real (resp.\
complex) $\Sigma$-orders have odd class number.  Then
an upper density of more than 1/4 (resp.\ 1/2) of these
$\Sigma$-orders $R$ would satisfy $|\Cl_2(R)|\geq 2$.  Thus the limsup of the average
size of $|\Cl_2(R)|$ would be strictly larger than
$ 1 + (1/4) = 5/4$ \,\,(resp.\ $1+ (1/2) = 3/2)$, 
contradicting Theorem~1(a) (resp.\ Theorem~1(b)). {$\Box$ \vspace{2 ex}}

Next, we show that at least 50\% of real cubic fields $K$ in the family of cubic fields defined by $\Sigma$ satisfy $\Cl_2(K)\neq \Cl_2^+(K)$, and at least 25\% of such $K$ satisfy $\Cl_2(K)= \Cl_2^+(K)$.

\vspace{.2in}
\noindent
{\bf Proof of Corollary~\ref{nocontrast}:}
By the theorem of Armitage and Frohlich, for a cubic field~$K$
we have either $|\Cl_2^+(K)|=|\Cl_2(K)|$ or $|\Cl_2^+(K)|=
2\cdot|\Cl_2(K)|$.  Suppose that a lower density of strictly less than~50\% of the totally real
cubic fields $K$ defined by $\Sigma$ satisfy $|\Cl_2^+(K)|= 2\cdot |\Cl_2(K)|$.  
Then, since $|\Cl_2(K)|\geq 1$ for all~$K$,  the liminf of 
the average size of
$|\Cl_2^+(K)|$ would be strictly less than the average size of
$2\cdot|\Cl_2(K)|-(1/2)$.
However, the average size of
$2\cdot|\Cl_2(K)|-(1/2)$ is 2 by Theorem~1(a), contradicting Theorem
1(c).  {$\Box$ \vspace{2 ex}}

\vspace{.05in}

\noindent
{\bf Proof of Corollary~\ref{contrast}:}
Suppose that an upper density of strictly more than 75\% of
totally real cubic fields $K$ defined by $\Sigma$ satisfy $|\Cl_2^+(K)|\geq
2\cdot|\Cl_2(K)|\geq |\Cl_2(K)|+1$.
Then the limsup of the average size of
$|\Cl_2^+(K)|$ would be strictly larger than the average size of
$|\Cl_2(K)|+ (3/4)$, which is~2.  This again contradicts Theorem~1(c). 
{$\Box$ \vspace{2 ex}}

Finally, we prove that at least 50\% of real cubic fields $K$ in the family of cubic fields defined by $\Sigma$ do not possess units of every possible signature; in addition, at least 75\% of such cubic fields~$K$ possess units of mixed signature.

\vspace{.2in}
\noindent
{\bf Proof of Corollary~\ref{mixed1}:}
We note that if the class group and narrow class group of a number
field~$K$ are not isomorphic, then $K$ cannot possess a unit of every
possible real signature.  Corollary~\ref{nocontrast} now implies the
result. 
{$\Box$ \vspace{2 ex}}

\vspace{.05in}
\noindent
{\bf Proof of Corollary~\ref{mixed2}:}
By Corollary~\ref{odd}, a lower density of at least $75\%$ of cubic fields $K$ have odd
class number.  For any such cubic field~$K$, the 2-Sylow subgroup of
$\Cl^+(K)/\Cl(K)$ will be either trivial or a cyclic group of order 2,
by Armitage and Frohlich's theorem.  Thus any such cubic field~$K$ will
contain a unit of mixed signature.  {$\Box$ \vspace{2 ex}}

\subsection*{Acknowledgments}

We are very grateful to Benedict Gross,
Kiran Kedlaya, J\"urgen Kl\"uners, Hendrik
Lenstra, Peter Sarnak, Arul Shankar, Christopher Skinner, 
and Melanie Wood for helpful discussions.  The first author was supported by the Packard and Simons
Foundations and NSF Grant DMS-1001828. The second author was supported
by a National Defense Science \& Engineering Fellowship and an NSF
Graduate Research Fellowship.

\end{document}